\begin{document} 
\newtheorem{prop}{Proposition}[section]
\newtheorem{Def}{Definition}[section] \newtheorem{theorem}{Theorem}[section]
\newtheorem{lemma}{Lemma}[section] \newtheorem{Cor}{Corollary}[section]

\title[ MKG in 2D]{\bf Low regularity solutions for the (2+1) - dimensional Maxwell-Klein-Gordon equations in temporal gauge}
\author[Hartmut Pecher]{
{\bf Hartmut Pecher}\\
Fachbereich Mathematik und Naturwissenschaften\\
Bergische Universit\"at Wuppertal\\
Gau{\ss}str.  20\\
42119 Wuppertal\\
Germany\\
e-mail {\tt pecher@math.uni-wuppertal.de}}
\date{}

\begin{abstract}
The Maxwell-Klein-Gordon equations in 2+1 dimensions in temporal gauge are locally well-posed for low regularity data even below energy level. The corresponding (3+1)-dimensional case was considered by Yuan. Fundamental for the proof is a partial null structure in the nonlinearity which allows to rely on bilinear estimates in wave-Sobolev spaces by d'Ancona, Foschi and Selberg, on an $(L^p_x L^q_t)$ - estimate for the solution of the wave equation, and on the proof of a related result for the Yang-Mills equations by Tao.

\end{abstract}
\maketitle
\renewcommand{\thefootnote}{\fnsymbol{footnote}}
\footnotetext{\hspace{-1.5em}{\it 2010 Mathematics Subject Classification:} 
35Q40, 35L70 \\
{\it Key words and phrases:} Maxwell-Klein-Gordon, 
local well-posedness, temporal gauge}
\normalsize 
\setcounter{section}{0}
\section{Introduction and main results}
\noindent Consider the Maxwell-Klein-Gordon equations
\begin{align}
\label{1.1}
\partial^{\alpha} F_{\alpha \beta} & = -Im(\phi \overline{D_{\beta} \phi}) \\
\label{1.2}
D^{\mu}D_{\mu} \phi &= m^2 \phi
\end{align}
in Minkowski space $\mathbb{R}^{1+2} = \mathbb{R}_t \times \mathbb{R}^2_x$ with metric $diag(-1,1,1)$. Greek indices run over $\{0,1,2\}$, Latin indices over $\{1,2\}$, and the usual summation convention is used.  Here $m \in \mathbb{R}$ and
$$ \phi: \mathbb{R}\times \mathbb{R}^2 \to \mathbb{C} \, , \, A_{\alpha}: \mathbb{R} \times \mathbb{R}^2 \to \mathbb{R} \, , \, F_{\alpha \beta} = \partial_{\alpha} A_{\beta} - \partial_{\beta} A_{\alpha} \, , \, D_{\mu} = \partial_{\mu} + iA_{\mu} \, . $$
$A_{\mu}$ are the gauge potentials, $F_{\mu \nu}$ is the curvature. We use the notation $\partial_{\mu} = \frac{\partial}{\partial x_{\mu}}$, where we write $(x^0,x^1,x^2)=(t,x^1,x^2)$ and also $\partial_0 = \partial_t$.

Setting $\beta =0$ in (\ref{1.1}) we obtain the Gauss-law constraint
\begin{equation}
\label{1.3}
\partial^j F_{j 0} = -Im(\phi \overline{D_0 \phi})  \, .
\end{equation}
The system (\ref{1.1}),(\ref{1.2}) is invariant under the gauge transformations
$$A_{\mu} \to A'_{\mu} = A_{\mu} + \partial_{\mu} \chi \, , \, \phi \to \phi' = e^{i \chi} \phi \, , \, D_{\mu} \to D'_{\mu} = \partial_{\mu} + i A'_{\mu} \, .$$
This allows to impose an additional gauge condition. We exclusively consider the temporal gauge
\begin{equation}
\label{1.5}
A_0 = 0 \, .
\end{equation}
In this gauge the system (\ref{1.1}),(\ref{1.2}) is equivalent to
\begin{align}
\label{1.6}
\partial_t \partial^j A_j &=  Im(\phi \overline{\partial_t \phi}) \\
\Box A_j &= \partial_j(\partial^k A_k) - Im(\phi \overline{\partial_j \phi}) +  A_j |\phi|^2 \\
\Box \phi &= -i (\partial^k A_k) \phi - 2i A^k \partial_k \phi + A^k A_k \phi  + m^2 \phi\, ,
\end{align}
where $\Box = -\partial_t^2 + \Delta$ is the d'Alembert operator. 

Other choices of the gauge are the Coulomb gauge $\partial^j A_j =0$ and the Lorenz gauge $\partial^{\mu} A_{\mu} = 0$. 

Let us make some historical remarks. Most of the results were given in 3+1 dimensions. Klainerman and Machedon \cite{KM} proved global well-posedness in energy space in Coulomb gauge and temporal gauge. Local well-posedness in Coulomb gauge for data for $\phi$ in the Sobolev space $H^s$ and for $A$ in $H^r$ with $r=s > 1/2$, i.e., almost down to the critical space with repect to scaling, was shown by Machedon and Sterbenz \cite{MS}. In Lorenz gauge the global well-posedness result in energy space is due to Selberg and Teshafun \cite{ST}. The author \cite{P} proved local well-posedness for $s=\frac{3}{4}+\epsilon$ and $r=\frac{1}{2}+\epsilon$. In temporal gauge Yuan \cite{Y} obtained local well-posedness for $s=r > \frac{3}{4}$ in $X^{s,b}$-spaces and global well-posedness for $s=r=1$. The author \cite{P1} proved that the finite energy solutions are also unique in the natural solution spaces. These results in temporal gauge rely on a similar result by Tao \cite{T1} for the Yang-Mills equations and small data. 

In 2+1 dimensions Moncrief \cite{M} proved global well-posedness in Lorenz gauge for data in $H^2$. Local well-posedness in Lorenz gauge for $s=\frac{3}{4}+\epsilon$ and $r=\frac{1}{4}+\epsilon$ was shown by the author \cite{P}. In Coulomb gauge local well-posedness for $s=r=\frac{1}{2}+\epsilon$ and also for $s=\frac{5}{8}+\epsilon$ , $r=\frac{1}{4}+\epsilon$ was obtained by Czubak and Pikula \cite{CP}.

In the present paper we exclusively consider the (2+1)-dimensional case in the temporal gauge and prove local well-posedness for data under minimal regularity assumptions. We need $\phi(0) \in H^s$ , $(\partial_t \phi)(0) \in H^{s-1}$ , $A^{df}(0) \in  H^r$ , $(\partial_t A^{df})(0) \in H^{r-1}$ , $|\nabla|^{\tilde{\epsilon}} A^{cf}(0) \in H^{l-\tilde{\epsilon}}$ , where $A^{df}$ and $A^{cf}$ denote the divergence-free and "curl-free" part of $A$, respectively, where an admissible choice is $s=l=\frac{1}{2}+\frac{1}{14}+,$  $r = \frac{1}{4}+$ , and also $s=r=l=\frac{1}{2}+\frac{1}{12}+.$  Uniqueness holds in spaces of Bourgain-Klainerman-Machedon type. If $s=r=l=1$ we even obtain unconditional uniqueness in the natural solution spaces. For a precise statement we refer to Theorem \ref{Theorem}.  We make use of a partial null structure of the nonlinearities and use bilinear estimates in wave-Sobolev spaces which were given systematically by d'Ancona, Foschi and Selberg \cite{AFS}. We also need a powerful variant of Strichartz' estimates which gives an estimate for the $L^6_x L^2_t$-norm of the solution of the wave equation which goes back to Tataru \cite{KMBT}. The 3-dimensional variant was used by Tao \cite{T1} for the more general Yang-Mills equation. Tao's hybrid estimates in this paper for the product of functions in wave-Sobolev spaces $X^{s,b}_{|\tau|=|\xi|}$ and in product Sobolev spaces $X^{s,b}_{\tau = 0}$ (cf. the definition of the spaces below) are fundamental for our calculations.

We denote both the Fourier transform with respect to space and time and with respect to space by $\,\widehat{\cdot}\,\,$. The operator
$|\nabla|^{\alpha}$ is defined by $({\mathcal F}(|\nabla|^{\alpha} f))(\xi) = |\xi|^{\alpha} ({\mathcal F}f)(\xi)$ and similarly $ \langle \nabla \rangle^{\alpha}$ , where $\langle \,\cdot\, \rangle := (1+|\cdot|^2)^{\frac{1}{2}}$ . The inhomogeneous and homogeneous Sobolev spaces are denoted by $H^{s,p}$ and $\dot{H}^{s,p}$ , respectively. For $p=2$ we simply denote them by $H^s$ and $\dot{H}^s$. We repeatedly use the Sobolev embeddings $H^{s,p} \hookrightarrow L^q$ for $\frac{1}{p} \ge \frac{1}{q} \ge \frac{1}{p}-\frac{s}{2},$ and $\dot{H}^{s,p} \hookrightarrow L^q$  for $\frac{1}{q} = \frac{1}{p}-\frac{s}{2}$ , and $1<p\le q < \infty$ . We also use the notation
$a \pm := a \pm \epsilon$ for a sufficiently small $\, \epsilon >0$ , so that $a-- < a- < a<a+<a++$ .

The standard space $X^{s,b}_{\pm}$ of Bourgain-Klainerman-Machedon type (which was already considered by M. Beals \cite{B}) belonging to the half waves is the completion of the Schwarz space  $\mathcal{S}({\mathbb R}^3)$ with respect to the norm
$$ \|u\|_{X^{s,b}_{\pm}} = \| \langle \xi \rangle^s \langle  \tau \pm |\xi| \rangle^b \widehat{u}(\tau,\xi) \|_{L^2_{\tau \xi}} \, . $$ 
Similarly we define the wave-Sobolev space $X^{s,b}_{|\tau|=|\xi|}$ with norm

$$ \|u\|_{X^{s,b}_{|\tau|=|\xi|}} = \| \langle \xi \rangle^s \langle  |\tau| - |\xi| \rangle^b \widehat{u}(\tau,\xi) \|_{L^2_{\tau \xi}}  $$
and also $X^{s,b}_{\tau =0}$ with norm 
$$\|u\|_{X^{s,b}_{\tau=0}} = \| \langle \xi \rangle^s \langle  \tau  \rangle^b \widehat{u}(\tau,\xi) \|_{L^2_{\tau \xi}} \, .$$
We also define $X^{s,b}_{\pm}[0,T]$ as the space of the restrictions of functions in $X^{s,b}_{\pm}$ to $[0,T] \times \mathbb{R}^2$ and similarly $X^{s,b}_{|\tau| = |\xi|}[0,T]$ and $X^{s,b}_{\tau =0}[0,T]$ . We frequently use the estimate $\|u\|_{X^{s,b}_{\pm}} \le \|u\|_{X^{s,b}_{|\tau|=|\xi|}}$ for $b \le 0$ and the reverse estimate for $b \ge 0$. 

We decompose $A=(A_1,A_2)$ into its divergence-free part $A^{df}$ and its 
"curl-\\-free" part $A^{cf}$:
$$ A=A^{df} + A^{cf} \, , $$
where
\begin{align*}
A^{df} &:= PA  := (- \Delta)^{-1} (\partial_2(\partial_1 A_2 - \partial_2 A_1),-\partial_1(\partial_1A_2-\partial_2 A_1)) \\
& = (R_2(R_1 A_2-R_2 A_1),-R_1(R_1 A_2 - R_2 A_1))\, ,\\
A^{cf} & := -(-\Delta)^{-1} \nabla \, div A = -(R_1(R_1A_1+R_2A_2),R_2(R_1A_1+R_2A_2)) \, ,
\end{align*}
and the Riesz-transform $R_j$ is defined by $R_j = |\nabla|^{-1} \partial_j$.

Then we obtain the equivalent system
\begin{align}
\label{1.11}
\partial_t A^{cf} &= - (-\Delta)^{-1} \nabla Im(\phi \overline{\partial_t \phi}) \\
\label{1.12}
\Box A^{df} & = -P( Im(\phi \overline{ \nabla \phi})  -A |\phi|^2) \\
\label{1.13}
\Box \phi & =- i(\partial^j A_j^{cf}) \phi - 2i A^{df}_j \partial^j \phi -2i A_j^{cf} \partial^j \phi + A^j A_j \phi + m^2 \phi \, .
\end{align}

Defining
\begin{align*}
\phi_{\pm} = \frac{1}{2}(\phi \pm i \langle \nabla \rangle^{-1} \partial_t \phi)&
 \Longleftrightarrow \phi=\phi_+ + \phi_- \, , \, \partial_t \phi = i \langle \nabla \rangle (\phi_+ - \phi_-) \\
 A^{df}_{\pm} = \frac{1}{2}(A^{df} \pm i \langle \nabla \rangle^{-1} \partial_t A^{df}) & \Longleftrightarrow A^{df} = A^{df}_+ + A_-^{df} \, , \, \partial_t A^{df} = i \langle \nabla \rangle(A^{df}_+ - A^{df}_-)
 \end{align*}
 we can rewrite (\ref{1.11}),(\ref{1.12}),(\ref{1.13}) as
 \begin{align}
 \label{1.11*}
 \partial_t A^{cf} &= - (-\Delta)^{-1} \nabla Im(\phi \overline{\partial_t \phi}) \\
 \label{1.12*}
(-i \partial_t \pm \langle \nabla \rangle)A_{\pm} ^{df} & = \mp 2^{-1} \langle \nabla \rangle^{-1} ( R.H.S. \, of \, (\ref{1.12}) - A^{df}) \\
\label{1.13*}
(-i \partial_t \pm \langle \nabla \rangle) \phi_{\pm} &= \mp 2^{-1} \langle \nabla \rangle^{-1}( R.H.S. \, of \, (\ref{1.13}) - \phi) \, .
\end{align}
The initial data are transformed as follows:
\begin{align}
\label{1.14*}
\phi_{\pm}(0) &= \frac{1}{2}(\phi(0) \pm i^{-1} \langle \nabla \rangle^{-1} (\partial_t \phi)(0)) \\
\label{1.15*}
A^{df}_\pm(0) & = \frac{1}{2}(A^{df}(0) \pm i^{-1} \langle \nabla \rangle^{-1} (\partial_t A^{df})(0)) \, .
\end{align}

Our main result is preferably formulated in terms of the system (\ref{1.11}),(\ref{1.12}),(\ref{1.13}).
\begin{theorem}
\label{Theorem}
1. Assume $r> \frac{1}{4}$ , $l\ge s>\max(\frac{1}{2}+\frac{l}{8},\frac{1}{4}+\frac{l}{2},\frac{1}{4}+\frac{r}{2},\frac{7}{16}+\frac{r}{4})$ , $r+\frac{1}{2} > s \ge r-\frac{1}{2}$ , $s > l-\frac{1}{2}$ and $\tilde{\epsilon} > 0$ sufficiently small. Let $\phi_0 \in H^s({\mathbb R}^2),$  $\phi_1 \in H^{s-1}({\mathbb R}^2),$  $a = a^{df} + a^{cf} \, ,$  $a' = a'^{df} + a'^{cf}$ 
be given with $a^{df} \in H^r({\mathbb R}^2)$ , $|\nabla|^{\tilde{\epsilon}} a^{cf} \in H^{l-\tilde{\epsilon}}({\mathbb R}^2)$ , $a'^{df} \in H^{r-1}({\mathbb R}^2)\,,$ 
which satisfy the compatability condition
\begin{equation}
\label{CC}
\partial^j a'_j = Im(\phi_0 \overline{\phi}_1) \, .
\end{equation}
Then there exists $T>0$, such that (\ref{1.11}),(\ref{1.12}),(\ref{1.13}) with initial conditions
$ \phi(0)= \phi_0$ , $(\partial_t \phi)(0) = \phi_1$ , $A^{df}(0) = a^{df}$ , $(\partial_t A^{df})(0)= a'^{df}$ , $A^{cf}(0) = a^{cf}$
has a unique local solution
$$ \phi= \phi_++ \phi_- \quad , \quad A=A^{df}_+ + A^{df}_- + A^{cf}$$
with 
$$ \phi_{\pm} \in X^{s,\frac{1}{2}+\epsilon}_{\pm}[0,T] \, , \, A^{df}_{\pm} \in X^{r,\frac{3}{4}+\epsilon}_{\pm}[0,T] \, , \, |\nabla|^{\tilde{\epsilon}}  A^{cf} \in X^{l-\tilde{\epsilon},\frac{1}{2}+\epsilon-}_{\tau =0}[0,T] \, ,$$
where $\epsilon >0$ is sufficiently small.\\
2. This solution satisfies
$$\phi_{\pm} \in C^0([0,T],H^s({\mathbb R}^2)) \, , \, A^{df}_{\pm} \in C^0([0,T],H^r({\mathbb R}^2)) \, , $$
$$|\nabla|^{\tilde{\epsilon}} A^{cf} \in C^0([0,T],H^{l-\tilde{\epsilon}}({\mathbb R}^2)) \, . $$
In the case $s=r=l=1$ the solution is (unconditionally) unique in these spaces.
\end{theorem}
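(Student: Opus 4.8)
\emph{Strategy.} The plan is to solve the equivalent first-order system \eqref{1.11*}--\eqref{1.13*} by Picard iteration on a short time interval $[0,T]$, with iterates in
\[
\phi_\pm\in X^{s,\frac12+\epsilon}_\pm[0,T],\qquad A^{df}_\pm\in X^{r,\frac34+\epsilon}_\pm[0,T],\qquad |\nabla|^{\tilde\epsilon}A^{cf}\in X^{l-\tilde\epsilon,\frac12+\epsilon-}_{\tau=0}[0,T].
\]
For the half-wave equations \eqref{1.12*},\eqref{1.13*} I would use the standard energy inequalities $\|\phi_\pm\|_{X^{s,\frac12+\epsilon}_\pm[0,T]}\lesssim\|\phi_\pm(0)\|_{H^s}+\|F\|_{X^{s,-\frac12+\epsilon}_\pm[0,T]}$ and the analogue for $A^{df}_\pm$ at regularity $r$ and index $\frac34+\epsilon$, combined with the gain $\|F\|_{X^{\cdot,b-1}_\pm[0,T]}\lesssim T^{\theta}\|F\|_{X^{\cdot,b-1+\theta}_\pm[0,T]}$ for small $\theta>0$; for the transport equation \eqref{1.11*} I would integrate directly, $A^{cf}(t)=a^{cf}-\int_0^t(-\Delta)^{-1}\nabla\,Im(\phi\overline{\partial_t\phi})(\sigma)\,d\sigma$, and bound the integral in $X^{l-\tilde\epsilon,\frac12+\epsilon-}_{\tau=0}[0,T]$ by the corresponding $\tau=0$ inequality, again with a factor $T^{\theta}$. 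After the harmless operator $\langle\nabla\rangle^{-1}$ this reduces the whole problem to bilinear and trilinear estimates for the nonlinearities of \eqref{1.12},\eqref{1.13} and for $Im(\phi\overline{\partial_t\phi})$, mapping the product of the above spaces into $X^{r-1,-\frac14+\epsilon+}_\pm$, $X^{s-1,-\frac12+\epsilon+}_\pm$, resp.\ $X^{l-\tilde\epsilon-1,-\frac12+\epsilon-}_{\tau=0}$; since $(s,r,l)$ ranges over an open set, these estimates make the iteration map a contraction on a small ball for $T$ small, yielding local existence and uniqueness in the stated classes. The compatibility condition \eqref{CC} is precisely the identity $a'^{cf}=-(-\Delta)^{-1}\nabla\,Im(\phi_0\overline\phi_1)=(\partial_tA^{cf})(0)$ forced by \eqref{1.11*}, which is what makes a solution of \eqref{1.11*}--\eqref{1.13*} a genuine solution of the temporal-gauge system \eqref{1.1},\eqref{1.2}.

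\emph{The multilinear estimates.} These are the heart of the matter, and the point is that the nonlinearity carries only a \emph{partial} null structure. Writing $w:=R_1A_2-R_2A_1$ and $Q_{12}(f,g):=\partial_1f\,\partial_2g-\partial_2f\,\partial_1g$, the term $A^{df}_j\partial^j\phi$ in \eqref{1.13} equals $-Q_{12}(|\nabla|^{-1}w,\phi)$ because $A^{df}=(R_2w,-R_1w)$ is divergence-free, and the quadratic term $P\,Im(\phi\overline{\nabla\phi})$ in \eqref{1.12} is, up to a zeroth-order Fourier multiplier, $Q_{12}(\phi,\overline\phi)$ because the Leray projection retains only the vorticity; for both I would invoke the bilinear wave-Sobolev estimates of d'Ancona--Foschi--Selberg \cite{AFS}, which exploit the null form to recover the lost derivative. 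The remaining quadratic terms $-i(\partial^jA^{cf}_j)\phi$ and $-2iA^{cf}_j\partial^j\phi$ in \eqref{1.13} have no null structure; here one uses that $A^{cf}$ is controlled in a $\tau=0$ space and invokes Tao's hybrid estimates \cite{T1} for the product of a factor in $X^{s,b}_{|\tau|=|\xi|}$ and a factor in $X^{s,b}_{\tau=0}$, together with the Strichartz-type bound for $\|u\|_{L^6_xL^2_t}$ going back to Tataru \cite{KMBT} where the frequency interaction is worst; the small parameter $\tilde\epsilon$ supplies the margin these estimates require. The cubic terms $A_j|\phi|^2$ and $A^jA_j\phi$ carry one fewer derivative and are dispatched by iterating the bilinear $X^{s,b}$ product estimates with the Sobolev embeddings $H^{\sigma,p}\hookrightarrow L^q$.

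\emph{Persistence and unconditional uniqueness.} For part 2, the embedding $X^{\sigma,b}_\pm[0,T]\hookrightarrow C^0([0,T],H^\sigma)$ for $b>\frac12$ gives $\phi_\pm\in C^0H^s$ and $A^{df}_\pm\in C^0H^r$, while for $A^{cf}$ continuity in $H^{l-\tilde\epsilon}$ follows from the Duhamel formula above, or from $|\nabla|^{\tilde\epsilon}A^{cf}\in X^{l-\tilde\epsilon,\frac12+\epsilon-}_{\tau=0}[0,T]\hookrightarrow C^0([0,T],H^{l-\tilde\epsilon})$. For the endpoint $s=r=l=1$ I would establish unconditional uniqueness by the method of \cite{P1}: given any solution merely in $C^0([0,T],H^1)$ with the natural regularity for $\partial_t\phi$ and $\partial_tA^{df}$, a bootstrap in \eqref{1.11*}--\eqref{1.13*} --- estimating the nonlinearities in negative-order $X^{s,b}$ norms using only the assumed regularity --- shows that the solution necessarily lies in the iteration spaces, and the conditional uniqueness already obtained then applies.

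\emph{Main obstacle.} I expect the crux to be the curl-free field $A^{cf}$: it has very low regularity ($r=\frac14+$ is near-critical), it lacks the null structure that rescues the other quadratic terms, and it obeys only a transport rather than a wave equation, so the only available gain is the combination of the $\tau=0$ hybrid estimates of \cite{T1} with the $L^6_xL^2_t$ wave estimate. Balancing these gains against all the remaining quadratic and cubic interactions is exactly what forces the system of inequalities on $(s,r,l)$ in the hypothesis, and verifying that this system is nonempty --- that, e.g., $s=l=\frac12+\frac1{14}+$, $r=\frac14+$ and $s=r=l=\frac12+\frac1{12}+$ are admissible --- is itself part of the work.
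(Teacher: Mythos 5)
You follow essentially the same route as the paper: the reformulation (\ref{1.11*})--(\ref{1.13*}), the same iteration spaces $X^{s,\frac{1}{2}+\epsilon}_{\pm}$, $X^{r,\frac{3}{4}+\epsilon}_{\pm}$, $X^{l-\tilde{\epsilon},\frac{1}{2}+\epsilon-}_{\tau=0}$, the null-form identities for $A^{df}_j\partial^j\phi$ and $P(\phi\overline{\nabla\phi})$, the d'Ancona--Foschi--Selberg bilinear estimates, the Tataru-type $L^6_xL^2_t$ bound, Tao-type arguments for the $\tau=0$ factor, and the same bootstrap (any $C^0H^1$ solution is shown to lie in spaces covered by part 1) for unconditional uniqueness at $s=r=l=1$. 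At the level of strategy there is nothing to object to.

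The gap is that the entire quantitative content is deferred. The six estimates to which the contraction argument is reduced, (\ref{27})--(\ref{30'}), are asserted to follow from the cited tools, but their proofs --- the angular decomposition via (\ref{31}) with case-dependent exponents $\alpha,\beta,\gamma$, the splitting into the regions $||\tau|-|\xi||\ll|\xi|$ versus $\gtrsim|\xi|$ for (\ref{27}), and the averaging-principle/Schur-test argument for (\ref{29}) (Tao's hybrid estimates are for $3+1$ dimensions and cannot simply be invoked; the needed $2$D bound has to be reproved, and this is precisely where $l\ge s$ and $l>\frac{1}{2}$ enter) --- are exactly what generates the hypotheses $s>\frac{1}{2}+\frac{l}{8}$, $s>\frac{1}{4}+\frac{r}{2}$, $s>\frac{7}{16}+\frac{r}{4}$, $r>\frac{1}{4}$, $s\ge r-\frac{1}{2}$, $s>l-\frac{1}{2}$, so without carrying them out the theorem as stated is not proved. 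One concrete inaccuracy: the Leray projection gives $P(\phi\overline{\nabla\phi})$ equal to Riesz transforms applied to $|\nabla|^{-1}Q_{12}(Re\,\phi,Im\,\phi)$, as in (\ref{1.12'}),(\ref{1.12''}), not ``$Q_{12}(\phi,\overline{\phi})$ up to a zeroth-order multiplier''; the extra $|\nabla|^{-1}$ is the one-derivative gain that estimate (\ref{28}) exploits, and without it the $A^{df}$-equation cannot be closed at $r=\frac{1}{4}+$ with $s$ barely above $\frac{1}{2}$.
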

{\bf Remarks:}
\begin{itemize}
\item
The compatability conditon (\ref{CC}), which is necessary in view of (\ref{1.3}), determines $a'^{cf}$ as $a'^{cf} = -(-\Delta)^{-1} \nabla(Im(\phi_0 \overline{\phi}_1))$ . \\
Is is not difficult to see that $a'^{cf}$ fulfills $|\nabla|^{\tilde{\epsilon}} a'^{cf} \in H^{l-1-\tilde{\epsilon}}({\mathbb R}^2)$. One only has to show that
$$ \||\nabla|^{-1+\tilde{\epsilon}}(\phi_0 \overline{\phi}_1) \|_{H^{l-1-\tilde{\epsilon}}} \lesssim \|\phi_0\|_{H^s} \|\phi_1\|_{H^{s-1}} \ . $$
By duality this is equivalent to
$$\|\phi_0 \phi_2 \|_{H^{1-s}} \lesssim \|\phi_0\|_{H^s} \| |\nabla|^{1-\tilde{\epsilon}} \phi_2\|_{H^{1-l+\tilde{\epsilon}}} \, . $$
In the case of high frequencies of $\phi_2$ this follows from the Sobolev multiplication law (\ref{SML}) using $2s-l >0$ , and the low frequency case can be easily handled using $s>\frac{1}{2}$.  
\item The minimal regularity assumptions are given by $r=\frac{1}{4}+$ , $l=s=\frac{1}{2} + \frac{1}{14}+.$   
\item If one wants to have the same regularity for $\phi$ and $A$ one also checks that $r=l=s=\frac{1}{2}+\frac{1}{12}+$ is admissible. 
\item The choice $r=l=s=1$ is of course admissible. 
\end{itemize}

Fundamental for us are the following estimates. We frequently use the classical Sobolev multiplication law in dimension two :
\begin{equation}
\label{SML}
\|uv\|_{H^{-s_0}} \lesssim \|u\|_{H^{s_1}} \|v\|_{H^{s_2}} \, ,
\end{equation}
if $s_0 + s_1+s_2 \ge 1$ and $s_0+s_1+s_2 \ge \max(s_0,s_1,s_2)$ , where at most one of these inequalities is an equality.

The corresponding  bilinear estimates in wave-Sobolev spaces were proven by d'Ancona, Foschi and Selberg in the two-dimensional case in \cite{AFS} in a form which includes some more limit cases which we do not need.
\begin{prop}
\label{Prop.2}
For $s_0,s_1,s_2,b_0,b_1,b_2 \in {\mathbb R}$ and $u,v \in   {\mathcal S} ({\mathbb R}^{2+1})$ the estimate
$$\|uv\|_{X_{|\tau|=|\xi|}^{-s_0,-b_0}} \lesssim \|u\|_{X^{s_1,b_1}_{|\tau|=|\xi|}} \|v\|_{X^{s_2,b_2}_{|\tau|=|\xi|}} $$ 
holds, provided the following conditions are satisfied:
\begin{align*}
\nonumber
& b_0 + b_1 + b_2 > \frac{1}{2} \, ,
& b_0 + b_1 \ge 0 \, ,\quad \qquad  
& b_0 + b_2 \ge 0 \, ,
& b_1 + b_2 \ge 0
\end{align*}
\begin{align*}
\nonumber
&s_0+s_1+s_2 > \frac{3}{2} -(b_0+b_1+b_2) \\
\nonumber
&s_0+s_1+s_2 > 1 -\min(b_0+b_1,b_0+b_2,b_1+b_2) \\
\nonumber
&s_0+s_1+s_2 > \frac{1}{2} - \min(b_0,b_1,b_2) \\
\nonumber
&s_0+s_1+s_2 > \frac{3}{4} \\
 &(s_0 + b_0) +2s_1 + 2s_2 > 1 \\
\nonumber
&2s_0+(s_1+b_1)+2s_2 > 1 \\
\nonumber
&2s_0+2s_1+(s_2+b_2) > 1 
\end{align*}
\begin{align*}
\nonumber
&s_1 + s_2 \ge \max(0,-b_0) \, ,\quad
\nonumber
s_0 + s_2 \ge \max(0,-b_1) \, ,\quad
\nonumber
s_0 + s_1 \ge \max(0,-b_2)   \, .
\end{align*}
\end{prop}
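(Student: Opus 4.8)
The plan is to prove Proposition~\ref{Prop.2} by dualizing, reducing to a single dyadic core bilinear estimate, and summing, following the scheme of d'Ancona, Foschi and Selberg. Since the dual of $X^{-s_0,-b_0}_{|\tau|=|\xi|}$ under the $L^2_{t,x}$ pairing is $X^{s_0,b_0}_{|\tau|=|\xi|}$, the asserted estimate is equivalent to the symmetric trilinear bound
$$\Big|\int_{{\mathbb R}^{1+2}} u\,v\,w\,dx\,dt\Big| \lesssim \|u\|_{X^{s_1,b_1}_{|\tau|=|\xi|}}\,\|v\|_{X^{s_2,b_2}_{|\tau|=|\xi|}}\,\|w\|_{X^{s_0,b_0}_{|\tau|=|\xi|}}.$$
Passing to the Fourier side, the left-hand side is a convolution integral over the hyperplane $\zeta_0=\zeta_1+\zeta_2$, $\zeta_i=(\tau_i,\xi_i)$; substituting the nonnegative functions $F_i=\langle\xi_i\rangle^{s_i}\langle|\tau_i|-|\xi_i|\rangle^{b_i}|\widehat{\,\cdot\,}|\in L^2$ reduces everything to bounding a weighted convolution of $F_0,F_1,F_2$ by $\|F_0\|_2\|F_1\|_2\|F_2\|_2$, the weights being $\langle\xi_i\rangle^{-s_i}\langle|\tau_i|-|\xi_i|\rangle^{-b_i}$.

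Next I would split each function according to the sign of $\tau_i$, so that $\langle|\tau_i|-|\xi_i|\rangle$ becomes $\langle\tau_i-a_i|\xi_i|\rangle$ with $a_i=\operatorname{sign}(\tau_i)\in\{+1,-1\}$, and then decompose dyadically in the spatial frequencies $|\xi_i|\sim N_i$ and the modulations $\langle\tau_i-a_i|\xi_i|\rangle\sim L_i$. On each dyadic block the weights are essentially the constants $N_i^{-s_i}L_i^{-b_i}$, so the proposition follows once one has, for each sign pattern $(a_0,a_1,a_2)$ and each dyadic tuple, a \emph{core estimate}
$$\int_{\zeta_0=\zeta_1+\zeta_2} F_0\,F_1\,F_2 \lesssim \mathbf{C}(N,L)\,\|F_0\|_2\|F_1\|_2\|F_2\|_2$$
with an explicit constant $\mathbf{C}(N,L)$, and one then verifies that $\sum N_0^{-s_0}N_1^{-s_1}N_2^{-s_2}L_0^{-b_0}L_1^{-b_1}L_2^{-b_2}\,\mathbf{C}(N,L)<\infty$.

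The analytic heart is the core estimate, driven by two ingredients. The first is the elementary Cauchy--Schwarz/overlap bound: the convolution is controlled by $\|F_0\|_2\|F_1\|_2\|F_2\|_2$ times the square root of the supremal measure of a fiber $\{\zeta_1:\zeta_1+\zeta_2=\zeta_0,\ \text{all dyadic supports met}\}$, minimized over which variable is integrated out. The second is the resonance identity: with $\tau_0=\tau_1+\tau_2$ and $\xi_0=\xi_1+\xi_2$,
$$(\tau_0-a_0|\xi_0|)-(\tau_1-a_1|\xi_1|)-(\tau_2-a_2|\xi_2|)=a_1|\xi_1|+a_2|\xi_2|-a_0|\xi_0|,$$
so that $\max(L_0,L_1,L_2)\gtrsim\big|a_1|\xi_1|+a_2|\xi_2|-a_0|\xi_0|\big|$. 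In the non-resonant sign configurations this forces $L_{\max}\gtrsim N_{\max}$, while in the resonant ones the right-hand side is comparable to $\frac{|\xi_1||\xi_2|}{|\xi_1|+|\xi_2|}\,\theta^2$, where $\theta$ is the angle between $\xi_1$ and $\xi_2$, yielding a gain after an angular (Whitney) decomposition. Computing the fiber measures on the $L_i$-thickened light cones in the two-dimensional geometry then produces $\mathbf{C}(N,L)$ with the sharp powers of $N_{\min}$ and of the $L_i$.

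The main obstacle I anticipate is precisely this sharp overlap computation together with the summation bookkeeping. The resonant, near-parallel interactions (the hyperbolic Knapp examples) are where the angular gain degenerates and modulation must compensate; these must be isolated and the measure of the intersection of two thickened cones in ${\mathbb R}^{1+2}$ computed carefully. Once $\mathbf{C}(N,L)$ is in hand, one separates the summation according to which $N_i$ and which $L_i$ dominates, and each region converges under exactly one subset of the stated hypotheses: the sum over the smallest frequency requires the various $s_0+s_1+s_2$ thresholds, the modulation sums require $b_0+b_1+b_2>\frac12$ and the pairwise $b_i+b_j\ge0$, and the mixed high-frequency/high-modulation regimes force $(s_0+b_0)+2s_1+2s_2>1$ together with its two permutations. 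Checking that the listed conditions are precisely what renders every region summable is the delicate, if routine, endgame.
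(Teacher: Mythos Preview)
The paper does not actually prove Proposition~\ref{Prop.2}; it states the result and attributes it to d'Ancona, Foschi and Selberg \cite{AFS}, noting only that their version includes some limit cases not needed here. So there is no ``paper's own proof'' to compare against.

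Your proposal is a faithful outline of the strategy used in \cite{AFS}: dualize to a symmetric trilinear form, reduce by sign-splitting and dyadic decomposition in spatial frequency $N_i$ and modulation $L_i$ to a core convolution estimate with constant $\mathbf{C}(N,L)$, obtain that constant from Cauchy--Schwarz plus a sharp fiber-measure (overlap) computation guided by the resonance identity, and then sum. The identification of the resonant near-parallel case as the delicate regime, and the remark that each block of hypotheses governs summability in a specific frequency/modulation region, are both accurate. As a sketch this is correct; what remains is precisely the part you flag as the obstacle, namely the explicit two-dimensional overlap measure computation on thickened cones and the case-by-case summation, which is carried out in full in \cite{AFS}. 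For the purposes of this paper a citation suffices, and that is what the paper does.
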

Moreover we need the standard Strichartz estimate combined with the transfer principle (for a proof see \cite{S1}, Theorem 8):
\begin{equation}
\label{Str}
\|u\|_{L^6_{xt}} \lesssim \|u\|_{X^{\frac{1}{2},\frac{1}{2}+}_{|\tau|=|\xi|}} 
\end{equation}
and the following estimate, which essentially goes back to Tataru \cite{KMBT}.
\begin{lemma}
\label{Lemma}
For $2 \le p \le 6$ the following estimates hold:
\begin{align*}
\|u\|_{L^p_x L^2_t} & \lesssim \|u\|_{X^{\frac{1}{2}(\frac{1}{2}-\frac{1}{p}),\frac{3}{2}(\frac{1}{2}-\frac{1}{p})+}_{|\tau|=|\xi|}} \, , \\
\|u\|_{L^p_x L^{2+}_t} & \lesssim \|u\|_{X^{\frac{1}{2}(\frac{1}{2}-\frac{1}{p})+,\frac{3}{2}(\frac{1}{2}-\frac{1}{p})+}_{|\tau|=|\xi|}} \, .
\end{align*}
\end{lemma}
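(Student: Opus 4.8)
The natural approach is to build the whole family by interpolation between the two endpoints $p=2$ and $p=6$, and then to obtain the second estimate from the first by a Sobolev embedding in the time variable. The case $p=2$ is trivial: by Plancherel $\|u\|_{L^2_{xt}}=\|u\|_{X^{0,0}_{|\tau|=|\xi|}}\le\|u\|_{X^{0,0+}_{|\tau|=|\xi|}}$, which is exactly the first estimate for $p=2$ since $\tfrac12(\tfrac12-\tfrac12)=\tfrac32(\tfrac12-\tfrac12)=0$.

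The only substantial point is the endpoint $p=6$, namely $\|u\|_{L^6_xL^2_t}\lesssim\|u\|_{X^{1/6,\,1/2+}_{|\tau|=|\xi|}}$. Splitting $u$ into the parts supported in $\{\tau>0\}$ and $\{\tau<0\}$, on each of which $\langle|\tau|-|\xi|\rangle$ is the modulation weight of one fixed half-wave, and using that $L^6_xL^2_t$ is translation invariant in $t$, the transfer principle reduces the claim to the free wave bound $\|e^{\pm it|\nabla|}f\|_{L^6_xL^2_t}\lesssim\|f\|_{H^{1/6}}$, for which it suffices to prove the sharper homogeneous version with $\||\nabla|^{1/6}f\|_{L^2}$ on the right. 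Taking the Fourier transform in $t$, passing to polar coordinates in $\xi$, and integrating out the resulting $\delta(\tau-|\xi|)$, one obtains
\[
\|e^{it|\nabla|}f(x,\cdot)\|_{L^2_t}\;\lesssim\;\Big\|\,\tau\!\int_{\mathbb{S}^1}e^{i\tau\,x\cdot\omega}\,\widehat f(\tau\omega)\,d\omega\,\Big\|_{L^2_\tau(\mathbb{R}_+)}\,.
\]
Here I would invoke Minkowski's integral inequality (legitimate since $6\ge2$) to pull the $L^6_x$ norm inside the $L^2_\tau$ norm, and then the Stein-Tomas adjoint restriction estimate for the circle $\mathbb{S}^1\subset\mathbb{R}^2$, which after the rescaling $x\mapsto\tau x$ reads $\big\|\int_{\mathbb{S}^1}e^{i\tau\,x\cdot\omega}g(\omega)\,d\omega\big\|_{L^6_x}\lesssim\tau^{-1/3}\|g\|_{L^2(\mathbb{S}^1)}$. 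Collecting the powers of $\tau$ and recombining the polar integral then reproduces $\||\nabla|^{1/6}f\|_{L^2}$. Up to the $\epsilon$-loss in $b$, this is Tataru's estimate \cite{KMBT}, whose three-dimensional analogue is the one used by Tao \cite{T1}.

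With the two endpoints in hand, the first estimate for $2<p<6$ follows by complex interpolation: since $X^{s,b}_{|\tau|=|\xi|}$ is a weighted $L^2$-space one has $[X^{0,0}_{|\tau|=|\xi|},X^{1/6,1/2+}_{|\tau|=|\xi|}]_\theta=X^{\theta/6,\,\theta(1/2+)}_{|\tau|=|\xi|}$, while $[L^2_xL^2_t,L^6_xL^2_t]_\theta=L^p_xL^2_t$ with $\tfrac1p=\tfrac{1-\theta}2+\tfrac\theta6$, and the choice $\theta=3(\tfrac12-\tfrac1p)$ gives $\theta/6=\tfrac12(\tfrac12-\tfrac1p)$ and $\theta(1/2+)=\tfrac32(\tfrac12-\tfrac1p)+$. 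For the second estimate I would use the one-dimensional Sobolev embedding $H^{0+}_t\hookrightarrow L^{2+}_t$, which gives $\|u\|_{L^p_xL^{2+}_t}\lesssim\|\langle\partial_t\rangle^{0+}u\|_{L^p_xL^2_t}$, together with the elementary bound $\langle\tau\rangle\lesssim\langle\xi\rangle\langle|\tau|-|\xi|\rangle$, so that $\|\langle\partial_t\rangle^{0+}u\|_{X^{s,b}_{|\tau|=|\xi|}}\lesssim\|u\|_{X^{s+,\,b+}_{|\tau|=|\xi|}}$; inserting this into the first estimate raises only the spatial index by an arbitrarily small amount. I do not expect a genuine obstacle: the endpoint $p=6$ is classical restriction theory and the rest is bookkeeping, the one thing to watch being that the scaling-critical relation $b=3s$ between the two indices is preserved by the interpolation — which holds precisely because it is satisfied at both endpoints.
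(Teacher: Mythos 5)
Your argument is correct, and its skeleton is the same as the paper's: establish the endpoint bound $\|u\|_{L^6_xL^2_t}\lesssim\|u\|_{X^{1/6,1/2+}_{|\tau|=|\xi|}}$ via the free-wave estimate and the transfer principle, then interpolate with the trivial identity $\|u\|_{L^2_xL^2_t}=\|u\|_{X^{0,0}_{|\tau|=|\xi|}}$ to cover $2\le p\le 6$. You differ in two sub-steps. First, where the paper simply quotes \cite{KMBT}, Thm.\ B2 for $\|{\mathcal F}_t u\|_{L^2_\tau L^6_x}\lesssim\|u_0\|_{\dot H^{1/6}}$ and converts it by Plancherel and Minkowski, you reprove that estimate from scratch via the Stein--Tomas extension estimate for $\mathbb{S}^1$ (the exponent $6=\frac{2(d+1)}{d-1}$ in $d=2$ is exactly the Stein--Tomas endpoint, and your bookkeeping $\tau\cdot\tau^{-1/3}=\tau^{2/3}$, squared and multiplied by the polar Jacobian $\tau$, does give $\||\nabla|^{1/6}f\|_{L^2}$); this makes the lemma self-contained at the cost of redoing Tataru's appendix. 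Second, for the $L^p_xL^{2+}_t$ family the paper interpolates the $L^6_xL^2_t$ bound with the Strichartz estimate (\ref{Str}) to get $\|u\|_{L^6_xL^{2+}_t}\lesssim\|u\|_{X^{1/6+,1/2+}_{|\tau|=|\xi|}}$ and then interpolates with $X^{0,0}_{|\tau|=|\xi|}$, whereas you use the one-dimensional Sobolev embedding in $t$ together with $\langle\tau\rangle\lesssim\langle\xi\rangle\langle|\tau|-|\xi|\rangle$. Your route avoids invoking (\ref{Str}) altogether, but note it raises the modulation exponent as well as the spatial one (to $\frac32(\frac12-\frac1p)+{+}$), which is harmless only because of the $a+$ convention; the paper's interpolation keeps the modulation index exactly at $\frac32(\frac12-\frac1p)+$ and loses only in the spatial index, which is what the statement records. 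Both versions yield the lemma as used later, so there is no gap.
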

\begin{proof}
By \cite{KMBT}, Thm. B2 we obtain
$ \|{\mathcal F}_t u \|_{L^2_{\tau} L^6_x} \lesssim \|u_0\|_{\dot{H}^{\frac{1}{6}}} \, , $
if $u= e^{it|\nabla|} u_0$ and ${\mathcal F}_t$ denotes the Fourier transform with respect to time. This implies by Plancherel and Minkowski's inequality
$$ \|u\|_{L^6_x L^2_t} = \|{\mathcal F}_t u \|_{L^6_x L^2_{\tau}} \le \|{\mathcal F}_t u \|_{L^2_{\tau} L^6_x} \lesssim \|u_0\|_{\dot{H}^{\frac{1}{6}}} \, . $$
The transfer principle \cite{S1}, Prop. 8 implies 
\begin{equation}
\|u\|_{L^6_x L^2_t} \lesssim \|u\|_{X^{\frac{1}{6},\frac{1}{2}+}_{|\tau|=|\xi|}} \, .
\end{equation}
Interpolation with (\ref{Str}) gives
\begin{equation}
\|u\|_{L^6_x L^{2+}_t} \lesssim \|u\|_{X^{\frac{1}{6}+,\frac{1}{2}+}_{|\tau|=|\xi|}} \, .
\end{equation}
Interpolation of the last two inequalities with the trivial identity
$\|u\|_{L^2_x L^2_t} = \|u\|_{X^{0,0}_{|\tau|=|\xi|}} $ completes the proof.
\end{proof}

\section{Proof of the Theorem}
We now consider the Cauchy problem (\ref{1.11*}),(\ref{1.12*}),(\ref{1.13*}),(\ref{1.14*}),(\ref{1.15*}). 
Klainerman and Machedon detected that $A^{df} \cdot \nabla \phi$ and $P(\phi \overline{\nabla \phi})_k$ are null forms. An elementary calculation namely shows that
\begin{align}
\label{1.13'}
 A^{df}_i \partial^i \phi & = Q_{12}(\phi,|\nabla|^{-1}(R_1 A_2- R_2 A_1)) \end{align}
 and
 \begin{align}
\label{1.12'}
P(\phi \overline{\nabla \phi})_1 & = -2i R_2 |\nabla|^{-1} Q_{12}(Re \phi, Im \phi) \\
\label{1.12''}
P(\phi \overline{\nabla \phi})_2 & = 2i R_1 |\nabla|^{-1} Q_{12}(Re \phi, Im \phi)  \, , 
\end{align}
where the null form $Q_{12}$ is defined by
$$ Q_{12}(u,v):= \partial_1 u \partial_2 v - \partial_1 u \partial_2 v \, .$$
In order to estimate these null forms we also use the following estimate for the angle $\angle(\xi_1,\xi_2)$ between two vectors $\xi_1$ and $\xi_2$.
\begin{lemma}
Assume $0 \le \alpha,\beta,\gamma \le \frac{1}{2}$ and $\xi_i \in {\mathbb R}^2$ , $\tau_i \in{\mathbb R}$ $(i=1,2,3)$ with $\xi_1+\xi_2+\xi_3 = 0$ , $\tau_1+\tau_2+\tau_3 =0$ . Then the following estimate holds for independent signs $\pm$ and $\pm'$ :
\begin{equation}
\label{31}
\angle(\pm \xi_1,\pm' \xi_2) \lesssim \Big( \frac{\langle -\tau_1 \pm |\xi_1| \rangle}{\min(\langle \xi_1 \rangle,\langle \xi_2 \rangle)} \Big)^{\alpha} + \Big( \frac{\langle -\tau_2 \pm' |\xi_2| \rangle}{\min(\langle \xi_1 \rangle,\langle \xi_2 \rangle)} \Big)^{\beta} + \Big( \frac{\langle |\tau_3| - |\xi_3| \rangle}{\min(\langle \xi_1 \rangle,\langle \xi_2 \rangle)} \Big)^{\gamma} \, .
\end{equation}
\end{lemma}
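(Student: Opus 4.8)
The plan is to reduce everything to an elementary planar-geometry fact about angles combined with the usual relation between dispersive weights and angular separation on the light cone. The starting point is the observation that if $\xi_1 + \xi_2 + \xi_3 = 0$, then $\xi_3 = -(\xi_1+\xi_2)$, and the angle $\angle(\pm\xi_1, \pm'\xi_2)$ can be estimated by the "convergence'' of the three vectors. More precisely, the classical fact is that for the homogeneous wave symbol one has, for each choice of signs, the pointwise bound
\begin{equation*}
\angle(\pm\xi_1,\pm'\xi_2) \;\lesssim\; \frac{\langle \mathfrak{h}_1 \rangle + \langle \mathfrak{h}_2 \rangle + \langle \mathfrak{h}_3 \rangle}{\min(\langle\xi_1\rangle,\langle\xi_2\rangle)},
\end{equation*}
where $\mathfrak{h}_i$ denotes the relevant "modulation'' or hyperbolic weight associated with the $i$-th factor (here $-\tau_1\pm|\xi_1|$, $-\tau_2\pm'|\xi_2|$, and $|\tau_3|-|\xi_3|$). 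I would first establish this un-exponentiated version and then deduce \eqref{31} by interpolation-type reasoning.

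First I would fix the low-modulation regime: suppose all three weights $\langle\mathfrak{h}_i\rangle$ are smaller than, say, $c\min(\langle\xi_1\rangle,\langle\xi_2\rangle)$ for a small absolute constant $c$; otherwise one of the three terms on the right-hand side of \eqref{31} already dominates $1 \gtrsim \angle(\pm\xi_1,\pm'\xi_2)$ (since angles are bounded by $\pi$), after raising to the power $\alpha$, $\beta$, or $\gamma$ and noting $t^\theta \geq t$ for $t \le 1$, $0\le\theta\le 1$ — and similarly $t^\theta \ge 1$ for $t \ge 1$. So the only case requiring work is the genuinely resonant one where all vectors are close to their respective cones. In that regime, write $\tau_i = \mp^{(i)}|\xi_i| + \mathfrak{h}_i$ (with $\mp^{(3)}$ chosen to match $|\tau_3| = \pm|\xi_3|$, i.e. the appropriate sign so that $\mathfrak{h}_3 = |\tau_3|-|\xi_3|$ up to controlled error), add the three identities using $\tau_1+\tau_2+\tau_3 = 0$, and obtain
\begin{equation*}
\big|\, \pm^{(1)}|\xi_1| \pm^{(2)}|\xi_2| \pm^{(3)}|\xi_3| \,\big| \;\le\; \langle\mathfrak{h}_1\rangle + \langle\mathfrak{h}_2\rangle + \langle\mathfrak{h}_3\rangle.
\end{equation*}
The left-hand side is (for the appropriate sign configuration, using $\xi_3 = -(\xi_1+\xi_2)$) comparable to $\big||\xi_1| + |\xi_2| - |\xi_1+\xi_2|\big|$ or to $\big||\xi_1| - |\xi_2| \pm |\xi_1\pm\xi_2|\big|$, and a direct computation with the law of cosines shows each of these is $\gtrsim \min(|\xi_1|,|\xi_2|)\,\angle(\pm\xi_1,\pm'\xi_2)^2$ when the angle is small. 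This yields $\angle(\pm\xi_1,\pm'\xi_2)^2 \lesssim (\langle\mathfrak{h}_1\rangle+\langle\mathfrak{h}_2\rangle+\langle\mathfrak{h}_3\rangle)/\min(\langle\xi_1\rangle,\langle\xi_2\rangle)$, which is even stronger (with exponent $1/2$) than the terms with exponents $\alpha,\beta,\gamma \le 1/2$ appearing in \eqref{31}; a final application of $(a+b+c)^{1/2} \lesssim a^{1/2}+b^{1/2}+c^{1/2}$ and then $t^{1/2} \lesssim t^\theta$ for $t$ of size $\le 1$ and $\theta\le 1/2$ (with the $t\ge 1$ case again trivial as above) completes the bound.

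The main obstacle is the bookkeeping of sign configurations: one must check that for every combination of $\pm$, $\pm'$ and the induced choice $\pm^{(3)}$ coming from $|\tau_3|-|\xi_3|$, the resulting elliptic/hyperbolic expression $\big|\pm^{(1)}|\xi_1|\pm^{(2)}|\xi_2|\pm^{(3)}|\xi_3|\big|$ genuinely controls the small angle from below by $\min(|\xi_1|,|\xi_2|)\angle^2$, rather than degenerating. The potentially dangerous case is when two of the signs conspire so that the expression is a \emph{difference} of comparable magnitudes that could be small even for a non-small angle (the "parallel interaction''); but in that situation one exploits that $\xi_3 = -(\xi_1+\xi_2)$ forces $|\xi_3|$ to also be comparable, and a careful case split on which of $\langle\xi_1\rangle,\langle\xi_2\rangle,\langle\xi_3\rangle$ is smallest recovers the bound — indeed this is exactly why the right-hand side of \eqref{31} is symmetric in a way that always makes $\min(\langle\xi_1\rangle,\langle\xi_2\rangle)$ (not $\min$ over all three) the correct denominator. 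Apart from this case analysis the argument is a routine computation, so I would present the low-modulation reduction and the law-of-cosines estimate in detail and relegate the sign bookkeeping to a short case distinction.
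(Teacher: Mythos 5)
The paper does not actually prove this lemma; it is quoted with a pointer to \cite{S}, Lemma 2.1. Your sketch is, in substance, that standard argument, and its load-bearing steps are sound: the reduction of the general exponents to $\alpha=\beta=\gamma=\tfrac12$ (trivial when some ratio is $\gtrsim 1$, and $t^{1/2}\le t^{\theta}$ for $t\le 1$, $\theta\le\tfrac12$ otherwise), the triangle-inequality identity coming from $\tau_1+\tau_2+\tau_3=0$, which gives $\big|\pm|\xi_1|\pm'|\xi_2|\pm''|\xi_3|\big|\le\langle -\tau_1\pm|\xi_1|\rangle+\langle-\tau_2\pm'|\xi_2|\rangle+\langle|\tau_3|-|\xi_3|\rangle$ with $\pm''$ the sign of $\tau_3$, and the law-of-cosines estimate yielding the quadratic bound $\min(\langle\xi_1\rangle,\langle\xi_2\rangle)\,\angle(\pm\xi_1,\pm'\xi_2)^2\lesssim\sum$ of the three weights. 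One genuine error in the write-up must be fixed, however: the displayed ``classical fact'' with the first power, $\angle(\pm\xi_1,\pm'\xi_2)\lesssim\big(\langle\cdot\rangle+\langle\cdot\rangle+\langle\cdot\rangle\big)/\min(\langle\xi_1\rangle,\langle\xi_2\rangle)$, is false: take $\xi_1=(N,0)$, $\xi_2=(N,\sqrt N)$ and all three frequencies at distance $O(1)$ from the cone; then the angle is $\sim N^{-1/2}$ while that right-hand side is $\sim N^{-1}$. The correct un-exponentiated statement is the quadratic one (angle squared on the left), which is exactly what your resonant-regime computation produces, so the offending display and the remark about deducing \eqref{31} from it ``by interpolation'' should simply be deleted; nothing later depends on them.

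The sign bookkeeping you defer is indeed short, and no case degenerates, so your ``parallel interaction'' worry can be settled in two lines. For the pair $(\pm,\pm')=(+,-)$ put $\phi=\angle(\xi_1,-\xi_2)$; for either value of $\pm''$ one has
\begin{equation*}
\big|\,|\xi_1|-|\xi_2|\pm''|\xi_3|\,\big|\;\ge\;|\xi_3|-\big|\,|\xi_1|-|\xi_2|\,\big|
\;=\;\frac{2|\xi_1||\xi_2|\,(1-\cos\phi)}{|\xi_3|+\big|\,|\xi_1|-|\xi_2|\,\big|}
\;\gtrsim\;\min(|\xi_1|,|\xi_2|)\,\phi^2 ,
\end{equation*}
because for $\pm''=+$ this is the triangle inequality on the reals, while for $\pm''=-$ the left side equals $|\xi_2|+|\xi_3|-|\xi_1|$, which exceeds $|\xi_3|-\big||\xi_1|-|\xi_2|\big|$; the last step uses $|\xi_3|+\big||\xi_1|-|\xi_2|\big|\lesssim\max(|\xi_1|,|\xi_2|)$ and $1-\cos\phi\gtrsim\phi^2$ on $[0,\pi]$. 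The same-sign pair is simpler still: the combination is either $|\xi_1|+|\xi_2|+|\xi_3|\gtrsim\min(|\xi_1|,|\xi_2|)$ or $|\xi_1|+|\xi_2|-|\xi_3|\sim\min(|\xi_1|,|\xi_2|)\big(1-\cos\angle(\xi_1,\xi_2)\big)$. In particular no smallness of the angle is needed ($1-\cos$ does the work on all of $[0,\pi]$), the case $\min(|\xi_1|,|\xi_2|)\lesssim 1$ is absorbed by your ``otherwise'' branch since each weight is $\ge 1$, and the denominator $\min(\langle\xi_1\rangle,\langle\xi_2\rangle)$ always comes out, as you predicted. With these corrections your proof is complete and coincides with the argument in the cited reference.
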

For a proof see for example \cite{S}, Lemma 2.1.

\begin{proof}[{\bf Proof of Theorem \ref{Theorem}}]
{\bf Proof of part 1:} 
We use (\ref{1.13'}),(\ref{1.12'}),(\ref{1.12''}). By a contraction argument the local existence and uniqueness proof is reduced to suitable multilinear estimates for the right hand sides of (\ref{1.11*}),(\ref{1.12*}),(\ref{1.13*}). For (\ref{1.12*}), e.g. , we make use of the following well-known estimate for a solution of the linear equation
$
(-i \partial_t \pm \langle \nabla \rangle)A_{\pm} ^{df}  = G$ , namely
$$
\|A^{df}_{\pm}\|_{X^{k,b}_{\pm}[0,T]} \lesssim \|A^{df}_{\pm}(0)\|_{H^k} + T^{b'-b} \| G \|_{X^{k,b'-1}_{\pm}[0,T]} \, , $$
which holds for $k\in{\mathbb R}$  , $\frac{1}{2} < b \le b' < 1$ and $0<T \le 1$ .

Thus the local existence and uniqueness for large data (in which case we have to choose $b < b'$) , in the regularity class $$ \phi_{\pm} \in X^{s,\frac{1}{2}+\epsilon}_{\pm}[0,T] \, , \, A^{df}_{\pm} \in X^{r,\frac{3}{4}+\epsilon}_{\pm}[0,T] \, , \, |\nabla|^{\tilde{\epsilon}}  A^{cf} \in X^{l-\tilde{\epsilon},\frac{1}{2}+\epsilon-}_{\tau =0}[0,T] $$can be reduced to 
the following estimates for independent signs $\pm $ , $\pm' $ , $\pm''$ :
\begin{align}
\label{27}
\| |\nabla|^{-1+\tilde{\epsilon}} (\phi_1 \partial_t \phi_2)\|_{X^{l-\tilde{\epsilon},-\frac{1}{2}+\epsilon}_{\tau=0}} &\lesssim \|\phi_1\|_{X^{s,\frac{1}{2}+\epsilon}_{|\tau|=|\xi|}} \|\phi_2\|_{X^{s,\frac{1}{2}+\epsilon}_{|\tau|=|\xi|}} \, ,
\\
\label{28}
\||\nabla|^{-1}Q_{ij}(\phi_1,\phi_2)\|_{X^{r-1,-\frac{1}{4}+2\epsilon}_{\pm''}} 
&\lesssim \|\phi_1\|_{X^{s,\frac{1}{2}+\epsilon}_{\pm}} 
\|\phi_2\|_{X^{s,\frac{1}{2}+\epsilon}_{\pm'}} \, , \\
\label{28'}
\|Q_{ij}(|\nabla|^{-1}\phi_1,\phi_2)\|_{X^{s-1,-\frac{1}{2}+2\epsilon}_{\pm''}} & \lesssim \|\phi_1\|_{X^{r,\frac{3}{4}+\epsilon}_{\pm}} \|\phi_2\|_{X^{s,\frac{1}{2}+\epsilon}_{\pm'}} \, , \\
\label{29}
\| \nabla A \phi \|_{X^{s-1,-\frac{1}{2}+2\epsilon}_{|\tau|=|\xi|}} +
\| A \nabla \phi \|_{X^{s-1,-\frac{1}{2}+2\epsilon}_{|\tau|=|\xi|}} &\lesssim \||\nabla|^{\tilde{\epsilon}} A\|_{X^{l-\tilde{\epsilon},\frac{1}{2}+\epsilon-}_{\tau =0}}  \|\phi\|_{X^{s,\frac{1}{2}+\epsilon}_{|\tau|=|\xi|}} \, ,
\end{align}
\begin{align}
\label{30}
\| A \phi_1 \phi_2 \|_{X^{r-1,-\frac{1}{4}+2\epsilon}_{|\tau|=|\xi|}} &\lesssim  \min(\|A\|_{X^{r,\frac{3}{4}+\epsilon}_{|\tau|=|\xi|}}, \||\nabla|^{\tilde{\epsilon}}A\|_{X^{l-\tilde{\epsilon},\frac{1}{2}+\epsilon-}_{\tau =0}} ) \prod_{i=1}^2 \|\phi_i\|_{X^{s,\frac{1}{2}+\epsilon}_{|\tau|=|\xi|}} \, ,
\\
\label{30'}
\| A_1 A_2 \phi \|_{X^{s-1,-\frac{1}{2}+2\epsilon}_{|\tau|=|\xi|}} &\lesssim \prod_{i=1}^2 \min(\|A_i\|_{X^{r,\frac{3}{4}+\epsilon}_{|\tau|=|\xi|}},\||\nabla|^{\tilde{\epsilon}} A_i\|_{X^{l-\tilde{\epsilon},\frac{1}{2}+\epsilon-}_{\tau =0}} ) \|\phi\|_{X^{s,\frac{1}{2}+\epsilon}_{|\tau|=|\xi|}} \, .
\end{align}
{\bf Proof of (\ref{28'}):} The Fourier multiplier of $Q_{12}(|\nabla|^{-1} \phi_1,\phi_2)$ is bounded by
\begin{equation}
\label{32}
\frac{|\xi_1 \times \xi_2|}{|\xi_1|} \lesssim |\xi_2| \angle(\pm\xi_1,\pm' \xi_2) \, , 
\end{equation}
where $\xi_1 \times \xi_2 := \xi_{11} \xi_{22}-\xi_{21} \xi_{12}$ . If $\xi_1+\xi_2+\xi_3 =0$ we also have
\begin{equation}
\label{33}
\frac{|\xi_1 \times \xi_2|}{|\xi_1|} = \frac{|\xi_1 \times \xi_3|}{|\xi_1|} \lesssim |\xi_3|\angle(\pm\xi_1,\pm'' \xi_3) \, . 
\end{equation}
{\bf 1.} In the case $|\xi_3| \gtrsim \max(|\xi_1|,|\xi_2|)$ we use (\ref{32}). It suffices to show
\begin{align}
\nonumber
&\int_* \frac{\widehat{u_1}(\xi_1,\tau_1)}{\langle \xi_1 \rangle^r \langle - \tau_1 \pm |\xi_1| \rangle^{\frac{3}{4}+}} \frac{\widehat{u_2}(\xi_2,\tau_2) |\xi_2|}{\langle \xi_2 \rangle^s \langle - \tau_2 \pm' |\xi_2| \rangle^{\frac{1}{2}+}} 
\frac{\widehat{u_3}(\xi_3,\tau_3)}{\langle \xi_3 \rangle^{1-s} \langle      |\tau_3| - |\xi_3| \rangle^{\frac{1}{2}-}} \cdot\\
\label{34}
& \hspace{17em} \cdot\angle(\pm \xi_1, \pm' \xi_2) \, d\xi d\tau \lesssim \prod_{i=1}^3 \|u_i\|_{L^2_{xt}} \, .
\end{align}
The Fourier transforms are nonnegative without loss of generality.
Here * denotes integration over $\sum_{i=1}^3 \xi_i =0$ , $\sum_{i=1}^3 \tau_i =0$ and $d\xi d\tau = d\xi_1 d\xi_2 d\xi_3 d\tau_1 d \tau_2 d\tau_3$ . \\
We use (\ref{31}) with $\alpha = \beta = \frac{1}{2}$ , $\gamma = \frac{1}{2}-$ . \\
{\bf 1.1.} $|\xi_1| \le |\xi_2|$ . If the first term on the r.h.s. of (\ref{31}) is dominant we use $|\xi_3| \sim |\xi_2|$ and reduce to
\begin{align*}
\nonumber
&\int_* \frac{\widehat{u_1}(\xi_1,\tau_1)}{\langle \xi_1 \rangle^{r+\frac{1}{2}} \langle | \tau_1| - |\xi_1| \rangle^{\frac{1}{4}+}} 
\frac{\widehat{u_2}(\xi_2,\tau_2)}{ \langle |\tau_2| - |\xi_2| \rangle^{\frac{1}{2}+}} 
\frac{\widehat{u_3}(\xi_3,\tau_3)}{ \langle      |\tau_3| - |\xi_3| \rangle^{\frac{1}{2}-}} \,
d\xi d\tau 
\lesssim \prod_{i=1}^3 \|u_i\|_{L^2_{xt}} \, ,
\end{align*}
which follows from Prop. \ref{Prop.2} for $r > \frac{1}{4}$ , where we need the factor $\langle |\tau_1|-|\xi_1|\rangle^{\frac{1}{4}+}$ in the denominator. For the second and third term on the r.h.s. of (\ref{31}) we only have to show
\begin{align*}
\nonumber
&\int_* \frac{\widehat{u_1}(\xi_1,\tau_1)}{\langle \xi_1 \rangle^{r+\frac{1}{2}} \langle |\tau_1| - |\xi_1| \rangle^{\frac{3}{4}+}} 
\widehat{u_2}(\xi_2,\tau_2) 
\frac{\widehat{u_3}(\xi_3,\tau_3)}{ \langle      |\tau_3| - |\xi_3| \rangle^{\frac{1}{2}-}} \,
d\xi d\tau 
\lesssim \prod_{i=1}^3 \|u_i\|_{L^2_{xt}} \, ,
\end{align*}
and
\begin{align*}
\nonumber
&\int_* \frac{\widehat{u_1}(\xi_1,\tau_1)}{\langle \xi_1 \rangle^{r+\frac{1}{2}-} \langle | \tau_1 | - |\xi_1| \rangle^{\frac{3}{4}+}} 
\frac{\widehat{u_2}(\xi_2,\tau_2)}{ \langle |\tau_2| - |\xi_2| \rangle^{\frac{1}{2}+}} 
\widehat{u_3}(\xi_3,\tau_3) \,
d\xi d\tau 
\lesssim \prod_{i=1}^3 \|u_i\|_{L^2_{xt}} \, ,
\end{align*}
respectively, both of which follow from Prop. \ref{Prop.2} for $r>\frac{1}{4}$ . \\
{\bf 1.2.} $|\xi_1| \ge |\xi_2|$ . Using $|\xi_3| \sim |\xi_1|$ the l.h.s. of (\ref{34}) is bounded by 
\begin{align*}
\nonumber
&\int_* \frac{\widehat{u_1}(\xi_1,\tau_1)}{\langle \xi_1 \rangle^{1-s+r} \langle | \tau_1| - |\xi_1| \rangle^{\frac{1}{4}+}} \frac{\widehat{u_2}(\xi_2,\tau_2) |\xi_2|}{\langle \xi_2 \rangle^{s-\frac{1}{2}} \langle | \tau_2| - |\xi_2| \rangle^{\frac{1}{2}+}} 
\frac{\widehat{u_3}(\xi_3,\tau_3)}{ \langle |\tau_3| - |\xi_3| \rangle^{\frac{1}{2}-}} \, d\xi d\tau
\end{align*}
for the first term on the r.h.s. of (\ref{31}). Similarly for the second and third term we obtain the bounds
\begin{align*}
\nonumber
&\int_* \frac{\widehat{u_1}(\xi_1,\tau_1)}{\langle \xi_1 \rangle^{1-s+r} \langle | \tau_1| - |\xi_1| \rangle^{\frac{3}{4}+}} \frac{\widehat{u_2}(\xi_2,\tau_2) |\xi_2|}{\langle \xi_2 \rangle^{s-\frac{1}{2}} } 
\frac{\widehat{u_3}(\xi_3,\tau_3)}{ \langle |\tau_3| - |\xi_3| \rangle^{\frac{1}{2}-}} \, d\xi d\tau
\end{align*}
and
\begin{align*}
\nonumber
&\int_* \frac{\widehat{u_1}(\xi_1,\tau_1)}{\langle \xi_1 \rangle^{1-s+r} \langle | \tau_1| - |\xi_1| \rangle^{\frac{3}{4}+}} \frac{\widehat{u_2}(\xi_2,\tau_2) |\xi_2|}{\langle \xi_2 \rangle^{s-\frac{1}{2}-}\langle | \tau_2| - |\xi_2| \rangle^{\frac{1}{2}+}} 
\widehat{u_3}(\xi_3,\tau_3) \, d\xi d\tau \, ,
\end{align*}
respectively, all of which are bounded by $\prod_{i=1}^3 \|u_i\|_{L^2_{xt}}$ for $r>\frac{1}{4}$  , $s > \frac{1}{2}$ and $s \le r+1$ by Prop. \ref{Prop.2}.\\
{\bf 2.} In the case $|\xi_3| \ll |\xi_1| \sim |\xi_2|$ we use (\ref{33}).  It suffices to show
\begin{align}
\nonumber
&\int_* \frac{\widehat{u_1}(\xi_1,\tau_1)}{\langle \xi_1 \rangle^r \langle - \tau_1 \pm |\xi_1| \rangle^{\frac{3}{4}+}} \frac{\widehat{u_2}(\xi_2,\tau_2)}{\langle \xi_2 \rangle^s \langle |\tau_2| - |\xi_2| \rangle^{\frac{1}{2}+}} 
\frac{\widehat{u_3}(\xi_3,\tau_3) |\xi_3|}{\langle \xi_3 \rangle^{1-s} \langle      -\tau_3 \pm'' |\xi_3| \rangle^{\frac{1}{2}-}} \cdot\\
\label{35}
& \hspace{17em} \cdot\angle(\pm \xi_1, \pm'' \xi_3) \, d\xi d\tau \lesssim \prod_{i=1}^3 \|u_i\|_{L^2_{xt}} \, .
\end{align}
Using $|\xi_2| \gtrsim |\xi_3|$ and (\ref{31}) with $\alpha=\beta=\frac{1}{2}$ , $\gamma=\frac{1}{2}-$ and $\xi_2$ permuted with $\xi_3$ we bound the l.h.s. of (\ref{35}) by
\begin{align*}
\nonumber
&\int_* \frac{\widehat{u_1}(\xi_1,\tau_1)}{\langle \xi_1 \rangle^r \langle | \tau_1| - |\xi_1| \rangle^{\frac{3}{4}+}} \widehat{u_2}(\xi_2,\tau_2)  
\frac{\widehat{u_3}(\xi_3,\tau_3)}{\langle \xi_3 \rangle^{\frac{1}{2}} \langle |\tau_3| - |\xi_3| \rangle^{\frac{1}{2}-}} \, d\xi d\tau
\\
\nonumber
&+\int_* \frac{\widehat{u_1}(\xi_1,\tau_1)}{\langle \xi_1 \rangle^r \langle | \tau_1| - |\xi_1| \rangle^{\frac{3}{4}+}} \frac{\widehat{u_2}(\xi_2,\tau_2)}{ \langle | \tau_2| - |\xi_2| \rangle^{\frac{1}{2}+}} 
\frac{\widehat{u_3}(\xi_3,\tau_3)}{ \langle \xi_3 \rangle^{\frac{1}{2}-}} \, d\xi d\tau
\\
\nonumber
&+\int_* \frac{\widehat{u_1}(\xi_1,\tau_1)}{\langle \xi_1 \rangle^r \langle | \tau_1| - |\xi_1| \rangle^{\frac{1}{4}+}} \frac{\widehat{u_2}(\xi_2,\tau_2)}{ \langle | \tau_2| - |\xi_2| \rangle^{\frac{1}{2}+}} 
\frac{\widehat{u_3}(\xi_3,\tau_3)}{ \langle \xi_3 \rangle^{\frac{1}{2}} \langle |\tau_3| - |\xi_3| \rangle^{\frac{1}{2}-}} \, d\xi d\tau \, ,
\end{align*}
which gives (\ref{35}) by Prop. \ref{Prop.2} and completes the proof of (\ref{28'}). \\
{\bf Proof of (\ref{28}):} We recall (\ref{32}) and (\ref{33}) and obtain the following bounds for the Fourier multiplier of $Q_{12}(\phi_1,\phi_2)$ :
\begin{align}
\label{36}
|\xi_1 \times \xi_2| & \lesssim |\xi_1| |\xi_2| \angle(\pm \xi_1,\pm' \xi_2) \, , \\
\label{37}
|\xi_1 \times \xi_2| & \lesssim |\xi_1| |\xi_3| \angle(\pm \xi_1,\pm'' \xi_3)  \, .
\end{align}
{\bf 1.} In the case $|\xi_3| \gtrsim \max(|\xi_1|,|\xi_2|)$ we use (\ref{36}) and reduce the desired estimate to
\begin{align}
\nonumber
&\int_* \frac{\widehat{u_1}(\xi_1,\tau_1) |\xi_1|}{\langle \xi_1 \rangle^s \langle - \tau_1 \pm |\xi_1| \rangle^{\frac{1}{2}+}} \frac{\widehat{u_2}(\xi_2,\tau_2) |\xi_2|}{\langle \xi_2 \rangle^s \langle - \tau_2 \pm' |\xi_2| \rangle^{\frac{1}{2}+}} 
\frac{\widehat{u_3}(\xi_3,\tau_3)}{|\xi_3| \langle \xi_3 \rangle^{1-r} \langle      |\tau_3| - |\xi_3| \rangle^{\frac{1}{4}-}} \cdot\\
\label{38}
& \hspace{17em} \cdot\angle(\pm \xi_1, \pm' \xi_2) \, d\xi d\tau \lesssim \prod_{i=1}^3 \|u_i\|_{L^2_{xt}} \, .
\end{align}
By symmetry we may assume $|\xi_1| \le |\xi_2|$ . We estimate $\angle(\pm\xi_1,\pm' \xi_2)$ by (\ref{31}) with $\alpha = \beta = \frac{1}{2}$ , $\gamma = \frac{1}{4}-$ . We estimate the l.h.s. of (\ref{38}) concerning the first term on the r.h.s. of (\ref{31}) using $|\xi_3| \sim |\xi_2|$ by
\begin{align*}
&\int_* \frac{\widehat{u_1}(\xi_1,\tau_1)}{\langle \xi_1 \rangle^{s-\frac{1}{2}}} \frac{\widehat{u_2}(\xi_2,\tau_2)}{\langle \xi_2 \rangle^{s-r+1} \langle | \tau_2| - |\xi_2| \rangle^{\frac{1}{2}+}} 
\frac{\widehat{u_3}(\xi_3,\tau_3)}{ \langle      |\tau_3| - |\xi_3| \rangle^{\frac{1}{4}-}}\, d\xi d\tau \, ,
\end{align*}
which gives (\ref{38}) by Prop. \ref{Prop.2} , where we used $s> \frac{1}{4} + \frac{r}{2}$ and $s \ge r-\frac{1}{2}$ . For the second term on the r.h.s. of (\ref{31}) we control the l.h.s. of (\ref{38}) by
\begin{align*}
&\int_* \frac{\widehat{u_1}(\xi_1,\tau_1)}{\langle \xi_1 \rangle^{s-\frac{1}{2}} \langle | \tau_1| - |\xi_1| \rangle^{\frac{1}{2}+}} \frac{\widehat{u_2}(\xi_2,\tau_2)}{\langle \xi_2 \rangle^{s+1-r}} 
\frac{\widehat{u_3}(\xi_3,\tau_3)}{ \langle      |\tau_3| - |\xi_3| \rangle^{\frac{1}{4}-}} \,d\xi d\tau \\
& \lesssim \int_* \frac{\widehat{u_1}(\xi_1,\tau_1)}{\langle \xi_1 \rangle^{2s+\frac{1}{2}-r} \langle | \tau_1| - |\xi_1| \rangle^{\frac{1}{2}+}} \widehat{u_2}(\xi_2,\tau_2)
\frac{\widehat{u_3}(\xi_3,\tau_3)}{ \langle      |\tau_3| - |\xi_3| \rangle^{\frac{1}{4}-}} \,d\xi d\tau \, .
\end{align*}
We apply Prop. \ref{Prop.2} using $s>\frac{1}{4}+\frac{r}{2}$ and $s \ge r-1$ to obtain (\ref{38}). For the last term on the r.h.s. of (\ref{31}) we estimate the l.h.s. of (\ref{38}) using $|\xi_3| \sim |\xi_2| \gtrsim |\xi_1|$ and $s \ge r-1$ as follows:
\begin{align*}
&\int_* \frac{\widehat{u_1}(\xi_1,\tau_1)}{\langle \xi_1 \rangle^{s-\frac{3}{4}-} \langle | \tau_1| - |\xi_1| \rangle^{\frac{1}{2}+}} 
\frac{\widehat{u_2}(\xi_2,\tau_2)}{\langle \xi_2 \rangle^{s} \langle      |\tau_2| - |\xi_2| \rangle^{\frac{1}{2}+}} \frac{\widehat{u_3}   (\xi_3,\tau_3)}{\langle \xi_3 \rangle^{1-r}}  \,d\xi d\tau \\
& \lesssim \int_* \frac{\widehat{u_1}(\xi_1,\tau_1)}{\langle \xi_1 \rangle^{2s-r+\frac{1}{4}-} \langle | \tau_1| - |\xi_1| \rangle^{\frac{1}{2}+}} 
\frac{\widehat{u_2}(\xi_2,\tau_2)}{ \langle |\tau_2| - |\xi_2| \rangle^{\frac{1}{2}+}} \widehat{u_3}   (\xi_3,\tau_3)  \,d\xi d\tau \lesssim \prod_{i=1}^3 \|u_i\|_{L^2_{xt}} \, .
\end{align*}
The last estimate follows from Prop. \ref{Prop.2} using $s>\frac{1}{4}+\frac{r}{2}$ again. \\
{\bf 2.} In the case $|\xi_3| \ll |\xi_1|\sim|\xi_2|$ we use (\ref{37}) and reduce the desired estimate to
\begin{align}
\nonumber
&\int_* \frac{\widehat{u_1}(\xi_1,\tau_1) |\xi_1|}{\langle \xi_1 \rangle^s \langle - \tau_1 \pm |\xi_1| \rangle^{\frac{1}{2}+}} \frac{\widehat{u_2}(\xi_2,\tau_2)}{\langle \xi_2 \rangle^s \langle |\tau_2| - |\xi_2| \rangle^{\frac{1}{2}+}} 
\frac{\widehat{u_3}(\xi_3,\tau_3)}{\langle \xi_3 \rangle^{1-r} \langle -\tau_3 \pm'' |\xi_3| \rangle^{\frac{1}{4}-}} \cdot\\
\label{39}
& \hspace{17em} \cdot\angle(\pm \xi_1, \pm'' \xi_3) \, d\xi d\tau \lesssim \prod_{i=1}^3 \|u_i\|_{L^2_{xt}} \, .
\end{align}
We estimate $\angle(\pm\xi_1,\pm'' \xi_3)$ by (\ref{31}) with $\alpha = \beta = \frac{1}{2}$ , $\gamma = \frac{1}{4}-$ . We bound the l.h.s. of (\ref{39}) for the first term on the r.h.s. of (\ref{31}) (and similarly for the second term) using $|\xi_1| \sim |\xi_2|$ and $ s \ge \frac{1}{2}$ by
\begin{align*}
&\int_* \frac{\widehat{u_1}(\xi_1,\tau_1)}{\langle \xi_1 \rangle^{s-\frac{1}{2}}} \frac{\widehat{u_2}(\xi_2,\tau_2)}{\langle \xi_2 \rangle^{s-\frac{1}{2}} \langle | \tau_2| - |\xi_2| \rangle^{\frac{1}{2}+}} 
\frac{\widehat{u_3}(\xi_3,\tau_3)}{ \langle \xi_3 \rangle^{\frac{3}{2}-r} \langle |\tau_3| - |\xi_3| \rangle^{\frac{1}{4}-}}\, d\xi d\tau \\
&\lesssim \int_* \widehat{u_1}(\xi_1,\tau_1) \frac{\widehat{u_2}(\xi_2,\tau_2)}{ \langle | \tau_2| - |\xi_2| \rangle^{\frac{1}{2}+}} 
\frac{\widehat{u_3}(\xi_3,\tau_3)}{\langle \xi_3 \rangle^{2s-r+\frac{1}{2}}  \langle |\tau_3| - |\xi_3| \rangle^{\frac{1}{4}-}}\, d\xi d\tau \, ,
\end{align*}
which again implies (\ref{39}) by Prop. \ref{Prop.2} using $s> \frac{1}{4} + \frac{r}{2}$ . \\
For the last term on the r.h.s. of (\ref{31}) we estimate the l.h.s. of (\ref{39}) by
\begin{align*}
&\int_* \frac{\widehat{u_1}(\xi_1,\tau_1)}{\langle \xi_1 \rangle^{s-\frac{1}{2}} \langle | \tau_1| - |\xi_1| \rangle^{\frac{1}{2}+}} 
\frac{\widehat{u_2}(\xi_2,\tau_2)}{\langle \xi_2 \rangle^{s-\frac{1}{2}} \langle      |\tau_2| - |\xi_2| \rangle^{\frac{1}{2}+}} \frac{\widehat{u_3}   (\xi_3,\tau_3)}{\langle \xi_3 \rangle^{1-r+\frac{1}{4}-}}  \,d\xi d\tau \, .
\end{align*}
If $r < \frac{5}{4}$ we apply Prop. \ref{Prop.2} using $s>\frac{1}{4}+\frac{r}{2}$ , $s> r-\frac{1}{2}$ , $ s > \frac{1}{2}$ and $s>\frac{7}{16} + \frac{r}{4}$ , which implies (\ref{39}). \\
If $r \ge \frac{5}{4}$ we use $|\xi_3| \ll |\xi_1| \sim |\xi_2|$ and obtain the bound
\begin{align*}
&\int_* \frac{\widehat{u_1}(\xi_1,\tau_1)}{\langle \xi_1 \rangle^{s-\frac{r}{2}+\frac{1}{8}-}  \langle |\tau_1| - |\xi_1| \rangle^{\frac{1}{2}+}} \frac{\widehat{u_2}(\xi_2,\tau_2)}{\langle \xi_2 \rangle^{s-\frac{r}{2}+\frac{1}{8}-} \langle | \tau_2| - |\xi_2| \rangle^{\frac{1}{2}+}} 
\widehat{u_3}(\xi_3,\tau_3)\, d\xi d\tau \, ,
\end{align*}
which again implies (\ref{39}) by Prop. \ref{Prop.2} using $s> \frac{1}{4} + \frac{r}{2}$ , completing the proof of (\ref{28}). \\
{\bf Proof of (\ref{27}):} 
We first remark that the singularity of $|\nabla|^{-1+\tilde{\epsilon}}$ ($\tilde{\epsilon} > 0$) is harmless in two dimensions (\cite{T}, Cor. 8.2) and it can be replaced by $\langle \nabla \rangle^{-1+\tilde{\epsilon}}$. As a first step we use Sobolev's multiplikation law (\ref{SML}) and obtain
\begin{align*}
\big|\int \int u_1 u_2 u_3 dx dt\big| \lesssim \|u_1\|_{X^{s,\frac{1}{2}+\epsilon}_{\tau =0}}
\|u_2\|_{X^{s,-\frac{1}{2}+\epsilon}_{\tau =0}}
\|u_3\|_{X^{1-l,\frac{1}{2}-\epsilon}_{\tau =0}} 
\end{align*}
under the assumptions $s > \frac{l}{2}$ and $ s > l-1  $ . This implies taking the time derivative into account 
\begin{align}
\label{40}
&\| \langle\nabla \rangle^{-1+\tilde{\epsilon}} (\phi_1 \partial_t \phi_2)\|_{X^{l-\tilde{\epsilon},-\frac{1}{2}+\epsilon}_{\tau=0}} \lesssim \|\phi_1\|_{X^{s,\frac{1}{2}+\epsilon}_{\tau=0}} \|\phi_2\|_{X^{s,\frac{1}{2}+\epsilon}_{\tau=0}} \, .
\end{align}
In a second step we want to prove
\begin{align}
\nonumber
&\| \langle\nabla \rangle^{-1+\tilde{\epsilon}} (\phi_1 \partial_t \phi_2)\|_{X^{l-\tilde{\epsilon},-\frac{1}{2}+\epsilon}_{\tau=0}} + \| \langle \nabla \rangle^{-1+\tilde{\epsilon}} (\phi_2 \partial_t \phi_1)\|_{X^{l-\tilde{\epsilon},-\frac{1}{2}+\epsilon}_{\tau=0}} \\
\label{41}
& \hspace{20em}\lesssim \|\phi_1\|_{X^{s,\frac{1}{2}+\epsilon}_{|\tau|=|\xi|}} \|\phi_2\|_{X^{s,\frac{1}{2}+\epsilon}_{\tau=0}} \, .
\end{align}
If $\widehat{\phi_1}(\xi_3,\tau_3)$ is supported in  $ ||\tau_3|-|\xi_3|| \gtrsim |\xi_3| $ we have the trivial bound
\begin{equation}
\label{41'}
 \|\phi_1\|_{X^{s,\frac{1}{2}+\epsilon}_{\tau=0}} \lesssim \|\phi_1\|_{X^{s,\frac{1}{2}+\epsilon}_{|\tau|=|\xi|}} \,, 
\end{equation}
so that (\ref{41}) follows from (\ref{40}).
Assuming from now on $||\tau_3|-|\xi_3|| \ll |\xi_3|$ we have to prove
\begin{equation}
\label{42}
 \int_* m(\xi_1,\xi_2,\xi_3,\tau_1,\tau_2,\tau_3) \prod_{i=1}^3 \widehat{u}_i(\xi_i,\tau_i) d\xi d\tau \lesssim \prod_{i=1}^3 \|u_i\|_{L^2_{xt}} 
 \end{equation}
where
$$ m= \frac{(|\tau_2|+|\tau_3|) \chi_{||\tau_3|-|\xi_3|| \ll |\xi_3|}}{\langle \xi_1 \rangle^{1-l} \langle \tau_1 \rangle^{\frac{1}{2}-\epsilon} \langle \xi_2 \rangle^{s} \langle \tau_2 \rangle^{\frac{1}{2}+\epsilon} \langle \xi_3 \rangle^s \langle |\tau_3|-|\xi_3|\rangle^{\frac{1}{2}+\epsilon}} \, . $$
Since $\langle \tau_3 \rangle \sim \langle \xi_3 \rangle$ and $\tau_1+\tau_2+\tau_3=0$ we have 
\begin{equation}
\label{43}
|\tau_2| + |\tau_3| \lesssim \langle \tau_1 \rangle^{\frac{1}{2}-\epsilon} \langle \tau_2 \rangle^{\frac{1}{2}+\epsilon} +\langle \tau_1 \rangle^{\frac{1}{2}-\epsilon} \langle \xi_3 \rangle^{\frac{1}{2}+\epsilon} +\langle \tau_2 \rangle^{\frac{1}{2}+\epsilon} \langle \xi_3 \rangle^{\frac{1}{2}-\epsilon} \, .
\end{equation}
Thus (\ref{42}) is a consequence of the following three estimates:
\begin{align*}
\big|\int\int uvw dx dt\big| &\lesssim \|u\|_{X^{1-l,0}_{\tau=0}} \|v\|_{X^{s,0}_{\tau=0}} \|w\|_{X^{s,\frac{1}{2}+\epsilon}_{|\tau|=|\xi|}}
\\
\big|\int\int uvw dx dt\big| &\lesssim \|u\|_{X^{1-l,0}_{\tau=0}} \|v\|_{X^{s,\frac{1}{2}+\epsilon}_{\tau=0}} \|w\|_{X^{s-\frac{1}{2}-\epsilon,\frac{1}{2}+\epsilon}_{|\tau|=|\xi|}} \\
\big|\int\int uvw dx dt\big| &\lesssim \|u\|_{X^{1-l,\frac{1}{2}-\epsilon}_{\tau=0}} \|v\|_{X^{s,0}_{\tau=0}} \|w\|_{X^{s-\frac{1}{2}+\epsilon,\frac{1}{2}+\epsilon}_{|\tau|=|\xi|}} \, , 
\end{align*}
which easily follow from Sobolev's multiplication law (\ref{SML}) using $s > \frac{1}{4} + \frac{l}{2}$ and $ s > l-\frac{1}{2} $ .

We now come to the proof of (\ref{27}) and remark that we may assume now that both functions $\phi_1$ and $\phi_2$ are supported in $||\tau|-|\xi|| \ll |\xi|$ , because otherwise (\ref{27}) is an immediate consequence of (\ref{41}) and (\ref{41'}). Thus (\ref{27}) follows if we can prove the following estimate:
$$ \int_* m(\xi_1,\xi_2,\xi_3,\tau_1,\tau_2,\tau_3) \prod_{i=1}^3 \widehat{u}_i(\xi_i,\tau_i) d\xi d\tau \lesssim \prod_{i=1}^3 \|u_i\|_{L^2_{xt}} \, , $$
where
$$m= \frac{|\tau_3|\chi_{||\tau_2|-|\xi_2|| \ll |\xi_2|} \chi_{||\tau_3|-|\xi_3|| \ll |\xi_3|}}{\langle \xi_1 \rangle^{1-l} \langle \tau_1 \rangle^{\frac{1}{2}-\epsilon} \langle \xi_2 \rangle^s \langle |\tau_2|-|\xi_2| \rangle^{\frac{1}{2}+\epsilon} \langle \xi_3 \rangle^s \langle |\tau_3|-|\xi_3|\rangle^{\frac{1}{2}+\epsilon}} \, . $$
Since $\langle \tau_3 \rangle \sim \langle \xi_3 \rangle$ , $\langle \tau_2 \rangle \sim \langle \xi_2 \rangle$ and $\tau_1+\tau_2+\tau_3=0$ we obtain
$$
|\tau_3| \lesssim \langle \tau_1 \rangle^{\frac{1}{2}-\epsilon} \langle \xi_3 \rangle^{\frac{1}{2}+\epsilon} +\langle \xi_2 \rangle^{\frac{1}{2}-\epsilon} \langle \xi_3 \rangle^{\frac{1}{2}+\epsilon} \, . $$
The first term is taken care of by the estimate
$$\big|\int \int uvw dx dt\big| \lesssim \|u\|_{X^{1-l,0}_{\tau=0}} \|v\|_{X^{s,\frac{1}{2}+\epsilon}_{|\tau|=|\xi|}} \|w\|_{X^{s-\frac{1}{2}-\epsilon,\frac{1}{2}+\epsilon}_{|\tau|=|\xi|}} \, ,$$
which follows from Prop. \ref{Prop.2} under the assumptions $s> \frac{1}{8} + \frac{l}{2}$ , $s>\frac{1}{4}+ \frac{l}{4} $ , $ s > \frac{1}{2}$ and $ s > l-\frac{1}{2} $ .\\
In order to treat the second term on the right hand side we have to show
\begin{equation}
\label{43'}
\int_*  \frac{\widehat{u}_1(\xi_1,\tau_1)\langle \xi_1 \rangle^{l-1}\widehat{u}_2(\xi_2,\tau_2)\widehat{u}_3(\xi_3,\tau_3)}{ \langle \tau_1 \rangle^{\frac{1}{2}-\epsilon} \langle \xi_2 \rangle^{s-\frac{1}{2}} \langle |\tau_2|-|\xi_2| \rangle^{\frac{1}{2}+\epsilon} \langle \xi_3 \rangle^{s-\frac{1}{2}} \langle |\tau_3|-|\xi_3|\rangle^{\frac{1}{2}+\epsilon}}  d\xi d\tau \lesssim \prod_{i=1}^3 \|u_i\|_{L^2_{xt}} \, , 
\end{equation}
which is equivalent to
$$ \big|\int \int uvw dx dt\big| \lesssim \|u\|_{X^{1-l,\frac{1}{2}-\epsilon}_{\tau=0}} \|v\|_{X^{s-\frac{1}{2}+\epsilon,\frac{1}{2}+\epsilon}_{|\tau|=|\xi|}} \|w\|_{X^{s-\frac{1}{2}-\epsilon,\frac{1}{2}+\epsilon}_{|\tau|=|\xi|}} \, .
$$
We consider first the case $l \le 1$.
By H\"older's inequality we obtain
$$ \big| \int \int uvw dx dt \big| \le \|u\|_{L^q_x L^r_t} \|v\|_{L^p_x L^z_t} \|w\|_{L^p_x L^z_t} \, $$
where we choose $\frac{1}{q} = \frac{l}{2}$ , $\frac{1}{p} = \frac{1}{2} - \frac{l}{4}$ , $\frac{1}{r} = \epsilon$ , $\frac{1}{z} = \frac{1}{2}-\frac{\epsilon}{2}$ , so that we obtain by Sobolev
$$ \|u\|_{L^q_x L^r_t} \lesssim \|u\|_{H^{1-l}_x H^{\frac{1}{2}-\epsilon}_t} \lesssim \|u\|_{X^{1-l,\frac{1}{2}-\epsilon}_{\tau = 0}} \, . $$
Because $z=2+$ and $2 \le p \le 6$ (for $l \le \frac{4}{3}$) we may apply Lemma \ref{Lemma} and obtain the desired estimate for $v$ and also $w$ :
$$\|v\|_{L^p_x L^z_t} \lesssim \|v\|_{X^{\frac{1}{2}(\frac{1}{2}-\frac{1}{p})+,\frac{1}{2}+}_{|\tau|=|\xi|}} \le \|v\|_{X^{s-\frac{1}{2}-,\frac{1}{2}+}_{|\tau|=|\xi|}} \, , $$
provided $\frac{1}{2}(\frac{1}{2}-\frac{1}{p}) < s - \frac{1}{2} \, \Longleftrightarrow \, s > \frac{1}{2} + \frac{l}{8} \, .$ Here the decisive lower bound for $s$ is required, namely $l=s > \frac{1}{2} + \frac{1}{14}$ , because we shall see below that we need $l \ge s$ for the estimate (\ref{29}). The proof of (\ref{43'}) in the case $l \le 1$ is complete.\\
Next we consider the case $ l>1 $. The left hand side of (\ref{43'}) is bounded by
\begin{align}
\nonumber
&\int_*  \frac{\widehat{u}_1(\xi_1,\tau_1)\widehat{u}_2(\xi_2,\tau_2)\widehat{u}_3(\xi_3,\tau_3)}{ \langle \tau_1 \rangle^{\frac{1}{2}-\epsilon} \langle \xi_2 \rangle^{s-l+\frac{1}{2}} \langle |\tau_2|-|\xi_2| \rangle^{\frac{1}{2}+\epsilon} \langle \xi_3 \rangle^{s-\frac{1}{2}} \langle |\tau_3|-|\xi_3|\rangle^{\frac{1}{2}+\epsilon}}  d\xi d\tau  \\ \nonumber
& \lesssim 
\nonumber
\int_*  \frac{\widehat{u}_1(\xi_1,\tau_1)\widehat{u}_2(\xi_2,\tau_2)\widehat{u}_3(\xi_3,\tau_3)}{ \langle \tau_1 \rangle^{\frac{1}{2}-\epsilon}  \langle |\tau_2|-|\xi_2| \rangle^{\frac{1}{2}+\epsilon} \langle \xi_3 \rangle^{2s-l} \langle |\tau_3|-|\xi_3|\rangle^{\frac{1}{2}+\epsilon}}  d\xi d\tau \\ \label{43''}
& \lesssim 
\int_*  \frac{\widehat{u}_1(\xi_1,\tau_1)\widehat{u}_2(\xi_2,\tau_2)\widehat{u}_3(\xi_3,\tau_3)}{ \langle \tau_1 \rangle^{\frac{1}{2}-\epsilon}  \langle |\tau_2|-|\xi_2| \rangle^{\frac{1}{2}+\epsilon} \langle \xi_3 \rangle^{\frac{1}{2}+} \langle |\tau_3|-|\xi_3|\rangle^{\frac{1}{2}+\epsilon}}  d\xi d\tau  \, ,
\end{align}
where we assumed w.l.o.g. $|\xi_2| \ge |\xi_3|$ , so that $\langle \xi_1 \rangle \lesssim \langle \xi_2 \rangle$ , and our assumptions $s-l+\frac{1}{2} \ge 0$ and $2s-l > \frac{1}{2}$ . By Sobolev and Lemma \ref{Lemma} we obtain
$$ \|w\|_{L^{\infty}_x L^2_t} \lesssim \|w\|_{H^{\frac{1}{3}+,6}_x L^2_t} \lesssim \|w\|_{X^{\frac{1}{2}+,\frac{1}{2}+}_{|\tau|=|\xi|}} \, $$
which implies
\begin{align*}
|\int uvw \, dx dt | &\lesssim  \|u\|_{L^2_x L^2_t} \|v\|_{L^2_x L^{\infty}_t} \|w\|_{L^{\infty}_x L^2_t} \\
 &\lesssim  \|u\|_{X^{0,\frac{1}{2}-\epsilon}_{\tau=0}} \|v\|_{X^{0,\frac{1}{2}+\epsilon}_{|\tau|=|\xi|}} \|w\|_{X^{\frac{1}{2}+,\frac{1}{2}+}_{|\tau|=|\xi|}} \, . 
\end{align*} 
Thus (\ref{43''}) is bounded by $\prod_{i=1}^3 \|u_i\|_{L^2_{xt}}$ , which completes the proof of 
(\ref{43'}) and  also (\ref{27}). \\
{\bf Proof of (\ref{29}):} This proof is similar to a related estimate for the Yang-Mills equation given by Tao \cite{T1}. We have to show
$$
\int_* m(\xi_1,\xi_2,\xi_3,\tau_1,\tau_2,\tau_3) \prod_{i=1}^3 \widehat{u}_i(\xi_i,\tau_i)  d\xi d\tau \lesssim \prod_{i=1}^3 \|u_i\|_{L^2_{xt}} \, , 
$$
where 
$$ m = \frac{(|\xi_2|+|\xi_3|) \langle \xi_1 \rangle^{s-1} }{\langle |\tau_1|-|\xi_1|) \rangle^{\frac{1}{2}-2\epsilon}\langle \xi_2 \rangle^s \langle |\tau_2| - |\xi_2|\rangle^{\frac{1}{2}+\epsilon} |\xi_3|^{\tilde{\epsilon}} \langle \xi_3 \rangle^{l-\tilde{\epsilon}}\langle \tau_3 \rangle^{\frac{1}{2}+\epsilon-}} \, .$$
Case 1: $|\xi_2| \lesssim |\xi_1|$ ($\Rightarrow$ $|\xi_2|+|\xi_3| \lesssim |\xi_1|$). \\
We ignore the factor $\langle |\tau_1| - |\xi_1| \rangle^{\frac{1}{2}-2\epsilon}$ and use the averaging principle (\cite{T}, Prop. 5.1) to replace $m$ by
$$ m' = \frac{ \langle \xi_1 \rangle^s \chi_{||\tau_2|-|\xi_2||\sim 1} \chi_{|\tau_3| \sim 1}}{ \langle \xi_2 \rangle^s |\xi_3|^{\tilde{\epsilon}} \langle \xi_3 \rangle^{l-\tilde{\epsilon}}} \, . $$
Let now $\tau_2$ be restricted to the region $\tau_2 =T + O(1)$ for some integer $T$. Then $\tau_1$ is restricted to $\tau_1 = -T + O(1)$, because $\tau_1 + \tau_2 + \tau_3 =0$, and $\xi_2$ is restricted to $|\xi_2| = |T| + O(1)$. The $\tau_1$-regions are essentially disjoint for $T \in {\mathbb Z}$ and similarly the $\tau_2$-regions. Thus by Schur's test (\cite{T}, Lemma 3.11) we only have to show
\begin{align*}
 &\sup_{T \in {\mathbb Z}} \int_* \frac{\langle \xi_1 \rangle^s \chi_{\tau_1=-T+O(1)} \chi_{\tau_2=T+O(1)} \chi_{|\tau_3|\sim 1} \chi_{|\xi_2|=|T|+O(1)}}{\langle \xi_2 \rangle^s |\xi_3|^{\tilde{\epsilon}} \langle \xi_3 \rangle^{l-\tilde{\epsilon}}} \prod_{i=1} \widehat{u}_i(\xi_i,\tau_i)  d\xi d\tau  \\
 & \hspace{25em} \lesssim \prod_{i=1}^3 \|u_i\|_{L^2_{xt}} \, . 
\end{align*}
The $\tau$-behaviour of the integral is now trivial, thus we reduce to
\begin{equation}
\label{55}
\sup_{T \in {\mathbb N}} \int_{\sum_{i=1}^3 \xi_i =0}  \frac{ \langle \xi_1 \rangle^s \chi_{|\xi_2|=T+O(1)}}{ \langle T \rangle^s |\xi_3|^{\tilde{\epsilon}} \langle \xi_3 \rangle^{l-\tilde{\epsilon}}} \widehat{f}_1(\xi_1)\widehat{f}_2(\xi_2)\widehat{f}_3(\xi_3)d\xi \lesssim \prod_{i=1}^3 \|f_i\|_{L^2_x} \, .
\end{equation}
Assuming now $|\xi_3| \le |\xi_1|$ (the other case being simpler) 
it only remains to consider the following two cases: \\
Case 1.1: $|\xi_1| \sim |\xi_3| \gtrsim T$. We now use our assumption $l \ge s$ , so that it suffices to show 
$$
\sup_{T \in {\mathbb N}} \int_{\sum_{i=1}^3 \xi_i =0}  
\frac{\chi_{|\xi_2|=T+O(1)}}{ T^l} \widehat{f}_1(\xi_1)\widehat{f}_2(\xi_2)\widehat{f}_3(\xi_3)d\xi \lesssim \prod_{i=1}^3 \|f_i\|_{L^2_x} \, . $$
The l.h.s. is bounded by
\begin{align*} 
& \sup_{T \in{\mathbb N}} \frac{1}{T^l} \|f_1\|_{L^2} \|f_3\|_{L^2} \| {\mathcal F}^{-1}(\chi_{|\xi|=T+O(1)} \widehat{f}_2)\|_{L^{\infty}({\mathbb R}^2)} \\
&\lesssim \sup_{T \in{\mathbb N}} \frac{1}{ T^l} 
\|f_1\|_{L^2} \|f_3\|_{L^2} \| \chi_{|\xi|=T+O(1)} \widehat{f}_2\|_{L^1({\mathbb R}^2)} \\
&\lesssim \hspace{-0.1em}\sup_{T \in {\mathbb N}} \frac{T^{\frac{1}{2}}}{T^l}  \prod_{i=1}^3 \|f_i\|_{L^2} \lesssim\hspace{-0.1em}
\prod_{i=1}^3 \|f_i\|_{L^2} \, ,
\end{align*}
because one easily calculates that $l > \frac{1}{2}$ under our assumptions. \\
Case 1.2: $|\xi_1| \sim T \gtrsim |\xi_3|$. 
In this case it suffices to show
$$
\sup_{T \in {\mathbb N}} \int_{\sum_{i=1}^3 \xi_i =0}  \frac{\chi_{|\xi_2|=T+O(1)}}{|\xi_3|^{\tilde{\epsilon}} \langle \xi_3 \rangle^{l-\tilde{\epsilon}}} \widehat{f}_1(\xi_1)\widehat{f}_2(\xi_2)\widehat{f}_3(\xi_3)d\xi \lesssim \prod_{i=1}^3 \|f_i\|_{L^2_x} \, .
$$
Case 1.2.1: $|\xi_3| \ge 1$ . An elementary calculation shows  that the l.h.s. is bounded by
\begin{align*}
 \sup_{T \in{\mathbb N}} \| \chi_{|\xi|=T+O(1)} \ast \langle \xi \rangle^{-2l}\|^{\frac{1}{2}}_{L^{\infty}(\mathbb{R}^2)} \prod_{i=1}^3 \|f_i\|_{L^2_x} \lesssim \prod_{i=1}^3 \|f_i\|_{L^2_x} \, ,
\end{align*}
using as in case 1.1 that $l > \frac{1}{2}$ .
\\
Case 1.2.2: $|\xi_3| \le 1$ . 
The l.h.s. is crudely estimated by
\begin{align*}
 &\sup_{T \in{\mathbb N}} \| \chi_{|\xi|=T+O(1)} \ast \chi_{|\xi|\le 1} |\xi|^{-2\tilde{\epsilon}}\|^{\frac{1}{2}}_{L^{\infty}(\mathbb{R}^2)} \prod_{i=1}^3 \|f_i\|_{L^2_x} \\
 &\hspace{5em}\lesssim (\int_{|\xi|\le 1} |\xi|^{-2\tilde{\epsilon}} d\xi)^{\frac{1}{2}} \prod_{i=1}^3 \|f_i\|_{L^2_x} \lesssim \prod_{i=1}^3 \|f_i\|_{L^2_x} \, .
\end{align*}
Case 2. $|\xi_1| \ll |\xi_2|$ ($\Rightarrow$ $|\xi_2|+|\xi_3| \lesssim |\xi_2|$). \\
Exactly as in case 1 we reduce to
$$
\sup_{T \in {\mathbb N}} \int_{\sum_{i=1}^3 \xi_i =0}  \frac{ \langle T \rangle^{1-s} \chi_{|\xi_2|=T+O(1)}}{ \langle \xi_1 \rangle^{1-s} |\xi_3|^{\tilde{\epsilon}} \langle \xi_3 \rangle^{l-\tilde{\epsilon}}} \widehat{f}_1(\xi_1)\widehat{f}_2(\xi_2)\widehat{f}_2(\xi_3)d\xi \lesssim \prod_{i=1}^3 \|f_i\|_{L^2_x} \, .
$$
Using $|\xi_3| \sim |\xi_2| \sim T \gg |\xi_1|$ and  $l > \frac{1}{2}$ we crudely estimate:
\begin{align*}
\frac{T^{1-s}}{\langle \xi_1 \rangle^{1-s} |\xi_3|^{\tilde{\epsilon}} \langle \xi_3 \rangle^{l-\tilde{\epsilon}} } \sim \frac{T^{1-s}}{\langle \xi_1 \rangle^{1-s} T^l} \lesssim \frac{T^{1-s}}{\langle \xi_1 \rangle^{1-s} \langle \xi_1 \rangle^{s-\frac{1}{2}+} T^{l-s+\frac{1}{2}-}} \lesssim \frac{1}{\langle \xi_1 \rangle^{\frac{1}{2}+}} \, . 
\end{align*}
Thus we reduce to
$$
\sup_{T \in {\mathbb N}} \int_{\sum_{i=1}^3 \xi_i =0}  
\frac{\chi_{|\xi_2|=T+O(1)}}{ \langle \xi_1 \rangle^{\frac{1}{2}+}} \widehat{f}_1(\xi_1)\widehat{f}_2(\xi_2)\widehat{f}_3(\xi_3)d\xi \lesssim \prod_{i=1}^3 \|f_i\|_{L^2_x} \, , $$
which can be shown as in Case 1.2. The proof of (\ref{29}) is complete. \\
{\bf Proof of (\ref{30}):} Assume first that $r \le 1$. We estimate by Sobolev's multiplication law (\ref{SML}) using $ s > \frac{1}{2}$ :
\begin{align*}
&\| A \phi_1 \phi_2 \|_{X^{r-1,-\frac{1}{2}+2\epsilon}_{|\tau|=|\xi|}} \lesssim \|A \phi_1 \phi_2\|_{L^2_t H^{r-1}_x} \lesssim \|A\|_{L^6_t H^r_x} \|\phi_1 \phi_2\|_{L^3_t H^{0+}_x} \\
&\lesssim \|A\|_{L^6_t H^r_x} \|\phi_1\|_{L^6_t H^{\frac{1}{2}+}_x} \|\phi_2\|_{L^6_t H^{\frac{1}{2}+}_x} \lesssim \|A\|_{X^{r,\frac{3}{4}+\epsilon}_{|\tau|=|\xi|}} \|\phi_1\|_{X^{s,\frac{1}{2}+\epsilon}_{|\tau|=|\xi|}}
\|\phi_2\|_{X^{s,\frac{1}{2}+\epsilon}_{|\tau|=|\xi|}} \, .
\end{align*}
Especially we have
\begin{align*}
\| A \phi_1 \phi_2 \|_{X^{0,-\frac{1}{2}+2\epsilon}_{|\tau|=|\xi|}}   \lesssim \|A\|_{X^{1,\frac{3}{4}+\epsilon}_{|\tau|=|\xi|}} \|\phi_1\|_{X^{\frac{1}{2}+,\frac{1}{2}+\epsilon}_{|\tau|=|\xi|}}
\|\phi_2\|_{X^{\frac{1}{2}+,\frac{1}{2}+\epsilon}_{|\tau|=|\xi|}} \, .
\end{align*}
The fractional Leibniz rule implies for $r>1$ :
\begin{align*}
\| A \phi_1 \phi_2 \|_{X^{r-1,-\frac{1}{2}+2\epsilon}_{|\tau|=|\xi|}} &  \lesssim \|A\|_{X^{r,\frac{3}{4}+\epsilon}_{|\tau|=|\xi|}} \|\phi_1\|_{X^{r-\frac{1}{2}+,\frac{1}{2}+\epsilon}_{|\tau|=|\xi|}}
\|\phi_2\|_{X^{r-\frac{1}{2}+,\frac{1}{2}+\epsilon}_{|\tau|=|\xi|}} \\
&   \lesssim \|A\|_{X^{r,\frac{3}{4}+\epsilon}_{|\tau|=|\xi|}} \|\phi_1\|_{X^{s,\frac{1}{2}+\epsilon}_{|\tau|=|\xi|}}
\|\phi_2\|_{X^{s,\frac{1}{2}+\epsilon}_{|\tau|=|\xi|}}
\end{align*}
by our assumption $r-\frac{1}{2} < s$.\\
Assume now again that $r \le 1$ .
We obtain
\begin{align*}
&\|A \phi_1 \phi_2\|_{L^2_t H^{r-1}_x} 
\lesssim \|A \phi_1 \phi_2\|_{L^2_t L^p_x} \lesssim \|A\|_{L^6_t L^4}  \|\phi_1\|_{L^6_t L^q_x} \|\phi_2\|_{L^6_t L^q_x}\\
& \lesssim \|A\|_{L^6_t \dot{H}^{\frac{1}{2}}} \|\phi_1\|_{L^6_t H^{\frac{1}{4}+\frac{r}{2}}_x} \|\phi_1\|_{L^6_t H^{\frac{1}{4}+\frac{r}{2}}_x} 
 \lesssim \|\nabla|^{\tilde{\epsilon}} A\|_{L^6_t H^{l-\tilde{\epsilon}}_x} \|\phi_1\|_{L^6_t H^s_x}\|\phi_1\|_{L^6_t H^s_x} \\
 & \lesssim \| |\nabla|^{\tilde{\epsilon}} A\|_{X^{l-\tilde{\epsilon},\frac{1}{2}+\epsilon-}_{\tau=0}} \|\phi_1\|_{X^{s,\frac{1}{2}+\epsilon}_{|\tau|=|\xi|}} \|\phi_1\|_{X^{s,\frac{1}{2}+\epsilon}_{|\tau|=|\xi|}}
 \, ,
\end{align*}
where $\frac{1}{p}= 1 - \frac{r}{2}$ , $\frac{1}{q} = \frac{3}{8}-\frac{r}{4}$ , so that by Sobolev we obtain $L^p_x \hookrightarrow H^{r-1}_x$ , $H^{\frac{1}{4}+\frac{r}{2}}_x \hookrightarrow L^q_x$ and $\dot{H}^{\frac{1}{2}}_x \hookrightarrow L^4_x $ . We used $l \ge \frac{1}{2}$ and $s \ge \frac{r}{2}+\frac{1}{4}$ . 
Especially we obtain for $r=1$ :
\begin{align*}
\|A \phi_1 \phi_2\|_{L^2_t L^2_x} 
\lesssim \| |\nabla|^{\tilde{\epsilon}} A\|_{X^{l-\tilde{\epsilon},\frac{1}{2}+\epsilon-}_{\tau=0}} \|\phi_1\|_{L^6_t H^{\frac{3}{4}}_x} \|\phi_1\|_{L^6_t H^{\frac{3}{4}}_x}
 \, .
\end{align*}
Next we consider the case $r=\frac{3}{2}$ . By the fractional Leibniz rule we obtain
\begin{align*}
&\|A \phi_1 \phi_2\|_{L^2_t H^{\frac{1}{2}}_x} \\
&\lesssim \|A\|_{L^6_t H^{\frac{1}{2},2+}}  \|\phi_1\|_{L^6_t L^{\infty -}_x} \|\phi_2\|_{L^6_t L^{\infty -}_x}
 +   \|A\|_{L^6_t L^{4+}_x}  \|\phi_1\|_{L^6_t H^{\frac{1}{2},4-}_x} \|\phi_2\|_{L^6_t L^{\infty -}_x}  \\
& \hspace{1em} +  \|A\|_{L^6_t L^{4+}_x}   \|\phi_1\|_{L^6_t L^{\infty -}_x} \|\phi_2\|_{L^6_t H^{\frac{1}{2},4-}_x}\\
& \lesssim \| |\nabla|^{\tilde{\epsilon}} A\|_{X^{\frac{1}{2},\frac{1}{2}+\epsilon-}_{\tau=0}} \|\phi_1\|_{L^6_t H^{1-}_x} \|\phi_2\|_{L^6_t H^{1-}_x} \\
& \lesssim \| |\nabla|^{\tilde{\epsilon}} A\|_{X^{\frac{1}{2},\frac{1}{2}+\epsilon-}_{\tau=0}}
\|\phi_1\|_{X^{1-,\frac{1}{2}+\epsilon}_{|\tau|=|\xi|}} \|\phi_2\|_{X^{1-,\frac{1}{2}+\epsilon}_{|\tau|=|\xi|}}
 \, ,
\end{align*}
This is enough, because $s \ge 1$ and $ l > \frac{1}{2} $ . \\
By bilinear interpolation between the cases $r=1$ and $r=\frac{3}{2}$ we easily obtain for $1<r<\frac{3}{2}$ :
\begin{align*}
&\|A \phi_1 \phi_2\|_{L^2_t H^{r-1}_x} \\
& \lesssim \| |\nabla|^{\tilde{\epsilon}} A\|_{X^{l-\tilde{\epsilon},\frac{1}{2}+\epsilon-}_{\tau=0}}
\|\phi_1\|_{L^6_t H^{\frac{1}{4}+\frac{r}{2}}} \|\phi_2\|_{L^6_t H^{\frac{1}{4}+\frac{r}{2}}}\\
& \lesssim \| |\nabla|^{\tilde{\epsilon}} A\|_{X^{l-\tilde{\epsilon},\frac{1}{2}+\epsilon-}_{\tau=0}} \|\phi_1\|_{X^{s,\frac{1}{2}+\epsilon}_{|\tau|=|\xi|}} \|\phi_1\|_{X^{s,\frac{1}{2}+\epsilon}_{|\tau|=|\xi|}}
 \, .
\end{align*}
under our assumption $s > \frac{r}{2} + \frac{1}{4}$ . \\
The remaining case $r > \frac{3}{2}$ follows from the case $r = \frac{3}{2}$ by the fractional Leibniz rule:
\begin{align*}
\|A \phi_1 \phi_2\|_{L^2_t H^{r-1}_x} 
& \lesssim \| |\nabla|^{\tilde{\epsilon}} A\|_{X^{r-1,\frac{1}{2}+\epsilon-}_{\tau=0}}
\|\phi_1\|_{X^{r-\frac{1}{2},\frac{1}{2}+\epsilon}_{|\tau|=|\xi|}} \|\phi_2\|_{X^{r-\frac{1}{2},\frac{1}{2}+\epsilon}_{|\tau|=|\xi|}}
 \\
& \lesssim \| |\nabla|^{\tilde{\epsilon}} A\|_{X^{l-\tilde{\epsilon},\frac{1}{2}+\epsilon-}_{\tau=0}} \|\phi_1\|_{X^{s,\frac{1}{2}+\epsilon}_{|\tau|=|\xi|}} \|\phi_1\|_{X^{s,\frac{1}{2}+\epsilon}_{|\tau|=|\xi|}} \, ,
\end{align*}
where we used  $r-\frac{1}{2} \le s \le l$ .

{\bf Proof of (\ref{30'}):} Assume first $ s \le 1$. By Sobolev's multiplication rule (\ref{SML}) and $ l \ge s > \frac{1}{2}$ we obtain
\begin{align*}
\|A_1 A_2 \phi\|_{L^2_t H^{s-1}_x} &\lesssim \|A_1 A_2\|_{L^2_t H^{0+}_x} \|\phi\|_{L^{\infty}_t H^s_x}             \lesssim \|A_1\|_{L^4_t H^{0+,4}_x } \|A_2\|_{L^4_t H^{0+,4}_x} \|\phi\|_{L^{\infty}_t H^s_x} \\
& \lesssim \||\nabla|^{\tilde{\epsilon}}A_1\|_{L^4_t H^{\frac{1}{2}-\tilde{\epsilon}+}_x} \||\nabla|^{\tilde{\epsilon}}A_1\|_{L^4_t H^{\frac{1}{2}-\tilde{\epsilon}+}_x}\|\phi\|_{L^{\infty}_t H^s_x} \\
& \lesssim \| |\nabla|^{\tilde{\epsilon}} A_1\|_{X^{l-\tilde{\epsilon},\frac{1}{2}+\epsilon-}_{\tau=0}} \| |\nabla|^{\tilde{\epsilon}} A_2\|_{X^{l-\tilde{\epsilon},\frac{1}{2}+\epsilon-}_{\tau=0}} \|\phi\|_{X^{s,\frac{1}{2}+\epsilon}_{|\tau|=|\xi|}} \, .
\end{align*}
For $s>1$ the Sobolev multiplication law implies :
\begin{align*}
\|A_1 A_2 \phi\|_{L^2_t H^{s-1}_x}  \lesssim \|A_1 A_2\|_{L^3_t H^{s-1}_x} \|\phi\|_{L^6_t H^s_x} \, .
\end{align*}
Now using $l \ge s > 1$ we obtain
\begin{align*}
&\|A_1 A_2\|_{L^3_t H^{s-1}_x} \lesssim  \| \langle \nabla \rangle^{s-1} A_1 A_2 \|_{L^3_t L^2_x} + \| A_1 \langle \nabla \rangle^{s-1} A_2 \|_{L^3_t L^2_x} \\
&\lesssim \|\langle \nabla \rangle^{s-1} A_1\|_{L^6_t L^4_x} \|A_2\|_{L^6_t L^4_x} +\|A_1\|_{L^6_t L^4_x} \|\langle \nabla \rangle^{s-1} A_2\|_{L^6_t L^4_x} \\
&\lesssim  \| |\nabla|^{\frac{1}{2}} A_1\|_{L^6_t H^{s-1}_x} \| |\nabla|^{\frac{1}{2}} A_2 \|_{L^6_t L^2_x} +  \| |\nabla|^{\frac{1}{2}} A_1 \|_{L^6_t L^2_x} \| |\nabla|^{\frac{1}{2}} A_2\|_{L^6_t H^{s-1}_x}\\
&\lesssim \| |\nabla|^{\tilde{\epsilon}} A_1\|_{X^{l-\tilde{\epsilon},\frac{1}{2}+\epsilon-}_{\tau =0}} \| |\nabla|^{\tilde{\epsilon}} A_2\|_{X^{l-\tilde{\epsilon},\frac{1}{2}+\epsilon-}_{\tau =0}} \, ,
\end{align*}
which gives the same bound for $\|A_1 A_2 \phi\|_{L^2_t H^{s-1}_x}$ as in the case $s \le 1$ . 

Next, let us assume first that $s \le \frac{3}{4}$ . 
By Prop. \ref{Prop.2} we obtain
$$ \|A_1 A_2 \phi \|_{X^{s-1,-\frac{1}{2}+2\epsilon}_{|\tau|=|\xi|}} \lesssim \|A_1 A_2\|_{X^{-\frac{1}{4}+,0}_{|\tau|=|\xi|}} \|\phi\|_{X^{s,\frac{1}{2}+\epsilon}_{|\tau|=|\xi|}} \, . $$
It remains to estimate $\|A_1 A_2\|_{X^{-\frac{1}{4}+,0}_{|\tau|=|\xi|}}=\|A_1 A_2\|_{L^2_t H^{-\frac{1}{4}+}_x}$ . On the one hand we obtain for $l>\frac{1}{2}$ and $r>\frac{1}{4}$ by Sobolev
\begin{align*}
\|A_1 A_2\|_{L^2_t H^{-\frac{1}{4}+}_x} & \lesssim \|A_1 A_2\|_{L^2_t L^{\frac{8}{5}+}_x} \lesssim \|A_1\|_{L^4_t L^4_x} \| A_2\|_{L^4_t L^{\frac{8}{3}+}_x} \lesssim \|A_1\|_{L^4_t \dot{H}^{\frac{1}{2}}_x} \| A_2\|_{L^4_t H^{\frac{1}{4}+}_x} \\
&\lesssim \| |\nabla|^{\tilde{\epsilon}} A_1\|_{L^4_t H^{l-\tilde{\epsilon}}_x} \|A_2\|_{L^4_t H^r_x}  
\lesssim \| |\nabla|^{\tilde{\epsilon}} A_1\|_{X^{l-\tilde{\epsilon},\frac{1}{2}+\epsilon-}_{\tau =0}} \|A_2\|_{X^{r,\frac{3}{4}+\epsilon}_{|\tau|=|\xi|}} \, .
\end{align*}
On the other hand we use Prop. \ref{Prop.2} again and obtain for $r > \frac{1}{4}$ :
$$\|A_1 A_2\|_{X^{-\frac{1}{4}+,0}_{|\tau|=|\xi|}} \lesssim \|A_1\|_{X^{\frac{1}{4}+,\frac{1}{2}+}_{|\tau|=|\xi|}}
\|A_2\|_{X^{\frac{1}{4}+,\frac{1}{2}+}_{|\tau|=|\xi|}} \lesssim \|A_1\|_{X^{r,\frac{3}{4}+\epsilon}_{|\tau|=|\xi|}} \|A_2\|_{X^{r,\frac{3}{4}+\epsilon}_{|\tau|=|\xi|}} \, , $$
which completes the proof in the case $ s \le \frac{3}{4} $ .

Next, we assume $ s > \frac{3}{4}$ . By Prop. \ref{Prop.2} we obtain
$$ \|A_1 A_2 \phi \|_{X^{s-1,-\frac{1}{2}+2\epsilon}_{|\tau|=|\xi|}} \lesssim \|A_1 A_2\|_{X^{s-1,0}_{|\tau|=|\xi|}} \|\phi\|_{X^{s,\frac{1}{2}+\epsilon}_{|\tau|=|\xi|}} \, . $$
It remains to estimate $\|A_1 A_2\|_{X^{s-1,0}_{|\tau|=|\xi|}}=\|A_1 A_2\|_{L^2_t H^{s-1}_x}$ . On the one hand we apply Prop. \ref{Prop.2} again , use $r \ge s -\frac{1}{2}$ and obtain
$$ \|A_1 A_2\|_{X^{s-1,0}_{|\tau|=|\xi|}} \lesssim \|A_1\|_{X^{r,\frac{3}{4}+\epsilon}_{|\tau|=|\xi|}}  \|A_2\|_{X^{r,\frac{3}{4}+\epsilon}_{|\tau|=|\xi|}} \, . $$
On the other hand, if $\frac{3}{4} < s \le 1$ we crudely estimate using $l\ge s > \frac{3}{4}$ and $r \ge s-\frac{1}{2} > \frac{1}{4}$ :
\begin{align*}
\|A_1 A_2\|_{L^2_t H^{s-1}_x} &\le \|A_1 A_2\|_{L^2_t L^2_x} \le \|A_1\|_{L^4_t L^8_x} \|A_2\|_{L^4_t L^{\frac{8}{3}}_x} \lesssim \| |\nabla|^{\frac{3}{4}} A_1\|_{L^4_t L^2_x} \|A_2\|_{L^4_t H^{\frac{1}{4}}_x} \\
&\lesssim \| |\nabla|^{\tilde{\epsilon}} A_1\|_{X^{l-\tilde{\epsilon},\frac{1}{2}+\epsilon-}_{\tau =0}} \|A_2\|_{X^{r,\frac{3}{4}+\epsilon}_{|\tau|=|\xi|}} \, .
\end{align*}
If $ s > 1 $ we use $l \ge s > 1$ and $r \ge s-\frac{1}{2}  > \frac{1}{2}$ and obtain
\begin{align*}
&\|A_1 A_2\|_{L^2_t H^{s-1}_x} \lesssim  \| \langle \nabla \rangle^{s-1} A_1 A_2 \|_{L^2_t L^2_x} + \| A_1 \langle \nabla \rangle^{s-1} A_2 \|_{L^2_t L^2_x} \\
&\lesssim \|\langle \nabla \rangle^{s-1} A_1\|_{L^4_t L^4_x} \|A_2\|_{L^4_t L^4_x} +\|A_1\|_{L^4_t L^4_x} \|\langle \nabla \rangle^{s-1} A_2\|_{L^4_t L^4_x} \\
&\lesssim  \| |\nabla|^{\frac{1}{2}} A_1\|_{L^4_t H^{s-1}_x} \| |\nabla|^{\frac{1}{2}} A_2 \|_{L^4_t L^2_x} +  \| |\nabla|^{\frac{1}{2}} A_1 \|_{L^4_t L^2_x} \| |\nabla|^{\frac{1}{2}} A_2\|_{L^4_t H^{s-1}_x}\\
&\lesssim \| |\nabla|^{\tilde{\epsilon}} A_1\|_{X^{l-\tilde{\epsilon},\frac{1}{2}+\epsilon-}_{\tau =0}} \|  A_2\|_{X^{r,\frac{3}{4}+\epsilon}_{|\tau|=|\xi|}}\, ,
\end{align*}
which completes the proof in the case $s>\frac{3}{4}$ and also
the proof of (\ref{30'}) and  part 1 of Theorem \ref{Theorem}. 
\vspace{1em}\\
{\bf Proof of part 2 of Theorem \ref{Theorem} :} The claimed regularity of the solution clearly holds. Let us now assume that the solution fulfills
$$\phi_{\pm} \, , \, A^{df}_{\pm} \in C^0([0,T],H^1({\mathbb R}^2)) \, , \, |\nabla|^{\tilde{\epsilon}} A^{cf} \in C^0([0,T],H^{1-\tilde{\epsilon}}({\mathbb R}^2)) \, . $$
We want to show that such a solution belongs to a space where uniqueness holds by part 1 of the theorem. \\
{\bf Step 1:} $ A^{df}_{\pm} \in X^{1-,1-}_{\pm}[0,T]$ .  \\
We drop $[0,T]$ from all the spaces in the sequel.
Interpolation between Strichartz' estimate (\ref{Str}) and  $\|u\|_{L^2_{xt}} = \|u\|_{X^{0,0}_{\pm}}$ gives $\|u\|_{L^{2+}_t L^{2+}_x} \lesssim \|u\|_{X^{0+,0+}_{|\tau|=|\xi|}} $ , thus by duality $\|u\|_{X^{0-,0-}_{|\tau|=|\xi|}} \lesssim \|u\|_{L^{2-}_t L^{2-}_x} $ . Consequently,
\begin{align*}
\|\phi \overline{\nabla \phi} \|_{X^{0-,0-}_{\pm}} &
\lesssim \|\phi \overline{\nabla \phi} \|_{L^{2-}_t L^{2-}_x}
\lesssim \|\phi\|_{L^{\infty}_t L^{\infty -}_x} \|\nabla \phi\|_{L^{\infty}_t L^2_x} T^{\frac{1}{2}+} \\ 
&\lesssim \|\phi\|_{L^{\infty}_t \dot{H}^{1-}_x} \|\phi\|_{L^{\infty}_t \dot{H}^1_x} T^{\frac{1}{2}+} < \infty 
\end{align*}
Moreover
\begin{align*}
\|A |\phi|^2 \|_{X^{0-,0-}_{\pm}} & \lesssim \|A |\phi|^2\|_{L^{2-}_t L^{2-}_x} \lesssim \|A\|_{L^{\infty}_t L^{6-}_x} \|\phi\|_{L^{\infty}_t L^{6-}_x}^2 T^{\frac{1}{2}+}  \\
& \lesssim \|A\|_{L^{\infty}_t \dot{H}^{\frac{2}{3}-}_x} \|\phi\|_{L^{\infty}_t \dot{H}^{\frac{2}{3}-}_x} T^{\frac{1}{2}+} < \infty \, .
\end{align*}
By (\ref{1.12*}) we obtain the desired regularity. \\
{\bf Step 2:} $\phi_{\pm} \in X^{1-,1-}_{\pm}[0,T]$ . \\
Using (\ref{1.13*}) this leads to the same estimates as in step 1. \\
{\bf Step 3:} $|\nabla|^{\tilde{\epsilon}} A^{cf} \in X^{\frac{3}{4}-\tilde{\epsilon},\frac{1}{2}+}_{\tau =0} $ . \\
Using (\ref{1.11*}) and step 2 it suffices to show
$$
 \| |\nabla|^{-1+\tilde{\epsilon}}(\phi \overline{\partial_t \phi})\|_{X^{\frac{3}{4}-\tilde{\epsilon},-\frac{1}{2}+}_{\tau=0}} \lesssim \|\phi\|_{X^{1-,1-}_{|\tau| = |\xi|}}^2 \, . $$
Replacing as before $|\nabla|^{-1+\tilde{\epsilon}}$ by $\langle \nabla \rangle^{-1+\tilde{\epsilon}}$ this reduces to
\begin{align*}
 \int_* \frac{\widehat{u_1}(\xi_1,\tau_1)}{\langle \xi_1 \rangle^{1-} \langle |\tau_1| - |\xi_1| \rangle^{1-}} \frac{\widehat{u_2}(\xi_2,\tau_2) |\tau_2|}{\langle \xi_2 \rangle^{1-} \langle |\tau_2| - |\xi_2| \rangle^{1-}} 
\frac{\widehat{u_3}(\xi_3,\tau_3)}{\langle \xi_3 \rangle^{\frac{1}{4}} \langle \tau_3 \rangle^{\frac{1}{2}-}}
 \, d\xi d\tau  \lesssim \prod_{i=1}^3 \|u_i\|_{L^2_{xt}} \, .
\end{align*}
Case 1: $|\tau_2| \lesssim |\xi_2|$ . Using 
$$|\tau_2| \lesssim \langle \xi_2 \rangle^{1-} \langle \xi_2 \rangle^{0+} \lesssim \langle \xi_2 \rangle^{1-}(\langle \xi_1 \rangle^{0+} + \langle \xi_3 \rangle^{0+}) $$
we reduce to
$$
\int_* \frac{\widehat{u_1}(\xi_1,\tau_1)}{\langle \xi_1 \rangle^{1--} \langle |\tau_1| - |\xi_1| \rangle^{1-}} \frac{\widehat{u_2}(\xi_2,\tau_2)}{ \langle |\tau_2| - |\xi_2| \rangle^{1-}} 
\frac{\widehat{u_3}(\xi_3,\tau_3)}{\langle \xi_3 \rangle^{\frac{1}{4}-} \langle \tau_3 \rangle^{\frac{1}{2}-}}
 \, d\xi d\tau   \lesssim \prod_{i=1}^3 \|u_i\|_{L^2_{xt}} \, ,
$$
which holds by Sobolev. \\
Case 2: $|\tau_2| \gg |\xi_2|$ and $|\xi_1| \gtrsim |\tau_1|$ . In this case we have
$$ |\tau_2| \lesssim ||\tau_2|-|\xi_2||^{1-} ( \langle \tau_3 \rangle^{0+} + \langle \xi_1 \rangle^{0+}) \, ,
$$
so that it suffices to show
$$
\int_* \frac{\widehat{u_1}(\xi_1,\tau_1)}{\langle \xi_1 \rangle^{1--} \langle |\tau_1| - |\xi_1| \rangle^{1-}} \frac{\widehat{u_2}(\xi_2,\tau_2)}{\langle \xi_2 \rangle^{1-}} 
\frac{\widehat{u_3}(\xi_3,\tau_3)}{\langle \xi_3 \rangle^{\frac{1}{4}} \langle \tau_3 \rangle^{\frac{1}{2}--}}
 \, d\xi d\tau   \lesssim \prod_{i=1}^3 \|u_i\|_{L^2_{xt}} \, ,
$$
which also holds by Sobolev. \\
Case 3: $|\tau_2| \gg |\xi_2|$ and $|\tau_1| \gg |\xi_1|$ . In this case we obtain
$$ |\tau_2| \lesssim \langle|\tau_2|-|\xi_2|\rangle^{1-} ( \langle \tau_3 \rangle^{0+} + \langle |\tau_1|-|\xi_1| \rangle^{0+}) \, ,
$$
which can be handled similarly as case 2. 

The regularity obtained in steps 1-3 is more than sufficient to deduce the uniqueness by an application of part 1 of the theorem. 
\end{proof}

\end{document}